\documentclass[a4paper]{article}

\usepackage{amsmath}
\usepackage{graphicx}
\usepackage{color}
\usepackage{multirow}
\usepackage{subfigure}
\graphicspath{{./figure/}}
\usepackage{setspace}
\usepackage{latexsym}
\usepackage{amsfonts}
\usepackage{amsthm}

\usepackage[numbers,sort&compress]{natbib}
\usepackage[colorlinks,
            linkcolor=blue,
            anchorcolor=blue,
           citecolor=blue
            ]{hyperref}


\def\mybibliography{
   \list
   {$[{\arabic{enumi}}]$}{\settowidth\labelwidth{[99]}
   \leftmargin\labelwidth \advance\leftmargin\labelsep
   \usecounter{enumi}}
   \def\newblock{\hskip .11em plus .07em minus -.07em}
   \sloppy \sfcode`\.=1000\relax }
   
\makeatletter
\def\@cite#1{\text{[{#1}]}}
\makeatother

\newcommand\abs[1]{\left \| #1\right \|}

\def\p{\boldsymbol{p}}
\def\q{\boldsymbol{q}}

\def\x{\boldsymbol{x}}

\def\z{\boldsymbol{z}}
\def\f{\boldsymbol{f}}
\def\F{\boldsymbol{F}}

\def\r{\boldsymbol{r}}

\def\v{\boldsymbol{v}}

\def\u{\boldsymbol{u}}

\def\z{\boldsymbol{z}}

\def\1{\boldsymbol{1}}

\def\0{\boldsymbol{0_3}}

%


\begin{document}

\title{Neighboring Optimal  Guidance for Low-Thrust Multi-Burn Orbital Transfers}


\author{\it
Zheng Chen\thanks{Ph.D Student, Laboratoire de Math\'ematiques d'Orsay, Univ. Paris-Sud, CNRS, Universit\'e Paris-Saclay, 91405, France. \underline{zheng.chen@math.u-psud.fr.}},\\
{\it University Paris-Sud $\&$ CNRS, Il-de-France, France, 91405}
}

\maketitle{}

\newcommand{\eqnref}[1]{(\ref{#1})}
 \newcommand{\class}[1]{\texttt{#1}}
 \newcommand{\package}[1]{\texttt{#1}}
 \newcommand{\file}[1]{\texttt{#1}}
 \newcommand{\BibTeX}{\textsc{Bib}\TeX}

\newtheorem{definition}{Definition}
\newtheorem{proposition}{Proposition}
\newtheorem{problem}{Problem}
\newtheorem{remark}{Remark}
\newtheorem{assumption}{Assumption}
\newtheorem{hypothesis}{Hypothesis}
\newtheorem{conjecture}{Conjecture}
\newtheorem{theorem}{Theorem}
\newtheorem{corollary}{Corollary}
\newtheorem{lemma}{Lemma}
\newtheorem{condition}{Condition}

\begin{abstract}
This paper presents a novel neighboring extremal approach to establish the neighboring optimal guidance (NOG) strategy for fixed-time low-thrust multi-burn orbital transfer problems. 
Unlike the classical variational methods which define and solve an accessory minimum problem (AMP) to design the NOG, the core of the proposed method is to construct a parameterized family of neighboring extremals around a nominal one. A geometric analysis on the projection behavior of the parameterized neighboring extremals shows that it is impossible to establish the NOG unless not only the typical Jacobi condition (JC) between switching times but also a transversal condition (TC) at each switching time is satisfied. According to the theory of field of extremals, the JC and the TC, once satisfied, are also sufficient to ensure  a multi-burn extremal trajectory to be locally optimal. Then, through deriving the first-order Taylor expansion of the parameterized neighboring extremals, the neighboring optimal feedbacks on thrust direction and switching times  are obtained.  Finally,  to verify the development of this paper, a fixed-time low-thrust fuel-optimal orbital transfer problem is calculated.
\end{abstract}



\section{Introduction}\label{SE:Introduction}

Due to numerous perturbations and errors, one cannot expect a spacecraft steered by the precomputed optimal control to exactly move on the correspondingly precomputed optimal trajectory. The precomputed optimal trajectory and  control are generally referred to as the nominal trajectory and  control, respectively. Once a  deviation from the nominal trajectory is measured by navigational systems, a guidance strategy is usually required to calculate a new (or corrected) control  in each guidance cycle such that the spacecraft can be steered by the new control to track the nominal trajectory or to move on a new optimal trajectory \cite{Lu:94}. Since the 1960s, various guidance schemes have been developed \cite{Kelley:64,Kelley:62,Chuang:96,Lee:65,Lu:08,Lu:10,Baldwin:12,Lu:03,Calise:98}, among of which there are two main categories: implicit one and explicit one. While the implicit guidance strategy generally compares the measured state  with the nominal one to generate control corrections; the explicit guidance strategy recomputes a  flight trajectory by onboard computers during its motion.  To implement an explicit guidance strategy, numerical integrations and iterations are usually required to solve a highly nonlinear two-point boundary-value problem  (TPBVP) and the time required for convergence  heavily depends on the merits of initial guesses as well as on the integration time of each iteration. In recent years, through employing a multiple shooting method and the analytical property arizing from the assumption that the gravity field is linear~\cite{Jezewski:72}, an explicit closed-loop guidance  is well developed by Lu {\it et al.} for exo-atmospheric ascent flights~\cite{Lu:08} and for deorbit problems~\cite{Baldwin:12}. This explicit type of guidance for endo-atmospheric ascent flights were studied as well in Refs.~\cite{Lu:10,Lu:03,Calise:98}. Whereas, the duration of a low-thrust orbtial transfer is so exponentially long that the onboard computer can merely afford the large amount of computational time for integrations and iterations once a shooting method is employed, which makes the explicit guidance  strategy unattractive to low-thrust orbital transfer problems.

The NOG is an implicit and less demanding guidance scheme, which not only allows the  onboard computer to realize an online computation once the gain matrices associated with the nominal extremal are computed offline and stored  in the onboard computer but also handles disturbances well \cite{Naidu:94}. Assuming the optimal control function is totally continuous, the linear feedback of control was proposed independently by Breakwell {\it et al.} \cite{Breakwell:63}, Kelley \cite{Kelley:62,Kelley:64}, Lee \cite{Lee:65},  Speyer {\it et al.} \cite{Speyer:68}, Bryson {\it et al}. \cite{Bryson:69}, and Hull \cite{Hull:03} through minimizing the second variation of the cost functional -- AMP  -- subject to the variational state and adjoint equations. Based on this method, an increasing number of literatures, including Refs.~\cite{Afshari:09,Pesch:80,Shafieenejad:13,Naidu:93,Seywald:94} and the references therein, on the topic of the NOG for orbital transfer problems have been published. 
More recently, a variable-time-domain NOG was proposed by Pontani {\it et al.} \cite{Pontani:15:1,Pontani:15:2} to avoid the numerical difficulties arising from the singularity of the gain matrices while approaching the final time and it was then applied to a continuous thrust space trajectories \cite{Pontani:15:3}.

 However,  difficulties arize when we consider to minimize the fuel consumption for a low-thrust orbital transfer because the corresponding optimal control function exhibits a bang-bang behavior  if the prescribed transfer time is bigger than the minimum transfer time for the same boundary conditions~\cite{Gergaud:06}. Considering the control function as a discontinuous scalar, the corresponding neighboring optimal feedback control law was studied by Mcintyre \cite{Mcintyre:66} and  Mcneal \cite{Mcneal:67}. Then, Foerster {\it et al.} \cite{Foerster:71} extended the work of Mcintyre and Mcneal to problems with discontinuous vector control functions. Using a multiple shooting technique, the algorithm for computing the NOG of general optimal control problems with discontinuous control and state constraints was developed in Ref.~\cite{Kugelmann:90:1}, which was then applied to a space shuttle guidance in Ref.~\cite{Kugelmann:90:2}.  As far as the author knows, a few scholars, including Chuang {\it et al.} \cite{Chuang:96} and Kornhauser {\it et al.} \cite{Kornhauser:72}, have made efforts on developing the NOG for low-thrust multi-burn orbital transfer problems. In the work \cite{Chuang:96} by Chuang  {\it et al.},  without taking into account the feedback on thrust-on times, the second variation on each burn arc was minimized such that the neighboring optimal feedbacks on  thrust direction and thrust off-times were obtained. Considering both endpoints are fixed, Kornhauser and Lion \cite{Kornhauser:72} developed an AMP for bounded-thrust optimal orbital transfer problems. Then, through minimizing this AMP, the linear feedback forms of thrust direction and switching times were derived. As is well known, it is impossible to construct the NOG unless the JC  holds along the nominal extremal \cite{Lee:65} since the gain matrices are unbounded if the JC is violated. This result was actually obtained by Kelley \cite{Kelley:62}, Kornhauser {\it et al.} \cite{Kornhauser:72}, Chuang {\it et al.} \cite{Chuang:96}, Pontani {\it et al.} \cite{Pontani:15:1,Pontani:15:2}, and many others who minimize the AMP to construct the NOG.  As a matter of fact, given every infinitesimal deviation from the nominal state, the JC, once satisfied, guarantees that there exists a neighboring extremal trajectory passing through the deviated state. Therefore, the existence of neighboring extremals is a prerequisite to establish the NOG. Once the optimal control function exhibits a bang-bang behavior, it is however not clear what conditions have to be satisfied in order to guarantee the existence of neighboring extremals \cite{Kornhauser:72}.
 
To construct the conditions that, once satisfied, guarantee that for every state in an infinitesimal neighborhood of the nominal one there exists a neighboring extremal passing through it, this paper presents a novel neighboring extremal approach to establish the NOG. The crucial idea is to construct a parameterized family of neighboring extremals around the nominal one. Then, as a result of a geometric study on the projection of the parameterized family  from tangent bundle onto state space, it is presented in this paper that the conditions sufficient for the existence of neighboring extremals around a bang-bang extremal consist of  not only  the JC between switching times  but also  a TC \cite{Schattler:12,Noble:02,Caillau:15} at each switching time. According to recent advances in geometric optimal control~\cite{Schattler:12,Caillau:15,Chen:153bp,Agrachev:04},  the JC and the TC, once satisfied, are also sufficient to guarantee the nominal extremal to be locally optimal provided some regularity assumptions are satisfied.  Given these two existence conditions, the neighboring optimal feedbacks on thrust direction and  switching times are established in this paper through deriving the first-order Taylor expansion of the parameterized neighboring extremals. 

The present paper is organized as follows: In Sect.~\ref{SE:Problem_Formulation}, the fixed-time low-thrust fuel-optimal orbital transfer problem is formulated and the first-order necessary conditions are presented by applying the Pontryagin Maximum Principle (PMP). In Sect.~\ref{SE:Sufficient}, a parameterized family of neighbouring extremals around a nominal one is first constructed.  Through analyzing the projection behavior of the parameterized family from tangent bundle onto state space, two conditions sufficient for the existence of neighboring extremals are constructed. Then, the neighboring optimal feedbacks on thrust direction and  switching times are derived. In Sect.~\ref{SE:Implementation}, the numerical implementation for the NOG scheme is presented. In Sect.~\ref{SE:Numerical}, a fixed-time low-thrust fuel-optimal orbital transfer problem is computed to verify the development of this paper. Finally, a conclusion is given in Sect. \ref{SE:Conclusion}.

\section{Optimal control problem}\label{SE:Problem_Formulation}

Throughout the paper, we denote the space of $n$-dimensional column vectors by $\mathbb{R}^n$ and the space of $n$-dimensional row vectors by $(\mathbb{R}^n)^*$.

\subsection{Dynamics}

Consider the spacecraft is controlled by a low-thrust propulsion system, the state $\x=[\r^T\ \v^T\ m]^T\in\mathbb{R}^n$ ($n=7$) for its translational motion in an Earth-centred inertial Cartesian coordinate frame (notated as $OXYZ$) consists of the position vector $\r \in \mathbb{R}^3\backslash\{0\}$, the velocity vector $\v \in \mathbb{R}^3$, and the mass $m\in\mathbb{R}_+$. Then, denote by $t\in\mathbb{R}$ the time, the set of differential equations for low-thrust orbital transfer problems  can be written as
\begin{eqnarray}
\begin{cases}
\dot{\r}(t) = \v(t),\\
\dot{\v}(t) = -\frac{\mu}{\abs{ \r(t) }^3}\r(t) + \frac{\u(t)}{m(t)},\\
\dot{m}(t) = -\beta{\abs{ \u(t)}},
\end{cases}
\label{EQ:Sigma}
\end{eqnarray}
where $\mu > 0$ is the Earth gravitational constant, the notation ``~$\abs{\cdot}$~''  denotes the Euclidean norm, $\beta > 0$ is a scalar constant determined by the specific impulse of the low-thrust engine equipped on the spacecraft, and $\u\in\mathbb{R}^3$ is the thrust (or control) vector, taking values in the admissible set
\begin{equation}
\mathcal{U}=\big\{\boldsymbol{u} \in \mathbb{R}^3 \ \arrowvert\ \abs{\boldsymbol{u} } \leq u_{max}\big\},\nonumber
\end{equation}
where the constant $u_{max} > 0$ denotes the maximum magnitude of the thrust.  Denote by $\rho\in[0,1]$  the normalized mass flow rate of the engine, i.e., $\rho = \parallel \boldsymbol{u}\parallel/u_{max}$, and let $\boldsymbol{\tau}\in\mathbb{S}^2$ be the unit vector of the thrust direction, one immediately gets $\u = u_{max} \rho \boldsymbol{\tau}$. Accordingly, $\rho$ and $\boldsymbol{\tau}$ can be considered as control variables. Set $\mathcal{T} := [0,1]\times\mathbb{S}^2$, we say $\mathcal{T}$ is the admissible set for the control $(\rho,\boldsymbol{\tau})$. Denote by the constants $m_c > 0$ and  $r_c>0$ the mass of the spacecraft without any fuel and the radius of the Earth, respectively, we define by 
\begin{eqnarray}
\mathcal{X}=\{(\r,\v,m)\in\mathbb{R}^3\backslash\{0\} \times \mathbb{R}^3\times\mathbb{R}\ \arrowvert\  \|\r\|>r_c,\ \r\times\v \neq 0,\ m \geq m_c\},\nonumber
\end{eqnarray}
the admissible set for the state $\x$. For the sake of notational clarity, let us define a controlled vector field $\f$ on $\mathcal{X}\times\mathcal{T}$ as
 \begin{eqnarray}
 \f:\mathcal{X}\times\mathcal{T}\rightarrow T_{\x}\mathcal{X},\ \f(\x,\rho,\boldsymbol{\tau})=\f_0(\x) + \rho \f_1(\x,\boldsymbol{\tau}),\nonumber
 \end{eqnarray}
 where
\begin{eqnarray}
\f_0(\x) = \left(\begin{array}{c}
\v\\
-\frac{\mu}{\parallel \r\parallel^3}\r\\
0\end{array}\right),\ \text{and}\ \f_1(\x,\boldsymbol{\tau}) = \left(\begin{array}{c}
\boldsymbol{0}\\
\frac{u_{max}}{m}\boldsymbol{\tau}\\
-\beta {u_{max}}
\end{array}\right).\nonumber
\end{eqnarray}
 Then, the dynamics in Eq.~(\ref{EQ:Sigma}) can be rewritten as
\begin{eqnarray}
{\Sigma}:
\dot{\x}(t) =\f(\x(t),\rho(t),\boldsymbol{\tau}(t)) = \f_0(\x(t)) + \rho(t) \f_1(\x(t),\boldsymbol{\tau}(t)),\ \ t\in\mathbb{R}.
\label{EQ:system}
\end{eqnarray}
Note that many mechanical systems can be represented as this control-affine form of dynamics. Thus, the NOG scheme established later can be directly applied to some other mechanical systems.

\subsection{Fuel-optimal problem}
Let $l\in\mathbb{N}$ be a finite positive integer such that $0<l\leq n$, we define the $l$-codimensional submanifold
\begin{eqnarray}
\mathcal{M}=\{\x \in\mathcal{X}\ \arrowvert\ \phi(\x)=0\}
\label{EQ:final_manifold}
 \end{eqnarray}
as the constraint submanifold for final states,  where $\phi:\mathcal{X}\rightarrow \mathbb{R}^l$ is a twice continuously differentiable function  and its expression depends on specific mission requirements.
\begin{definition}[\it Fuel-optimal problem (FOP)]
Given a fixed final time $t_f > 0$ and a fixed initial point $\x_0\in\mathcal{X}\backslash\mathcal{M}$, the fuel-optimal orbital transfer problem consists of steering the system $\Sigma$ in $\mathcal{X}$ by a measurable control $(\rho(\cdot),\boldsymbol{\tau}(\cdot)):[0,t_f]\rightarrow\mathcal{T}$ from the fixed initial point $\x_0$  to a final point $\x_f\in\mathcal{M}$ such that  the fuel consumption is minimized, i.e., 
\begin{eqnarray}
\int_{0}^{t_f}\rho(t) dt \rightarrow \text{min}.
\label{EQ:cost_functional}
\end{eqnarray}
\end{definition}
\noindent For every $u_{max}>0$, if $m_c>0$ is small enough, the controllability of the system $\Sigma$ holds in the admissible set $\mathcal{X}$ \cite{Chen:15controllability}. Let $t_m$ be the minimum transfer time of the system $\Sigma$ for the same boundary conditions as the {\it FOP}, if $t_f \geq t_m$, there exists at least one fuel-optimal solution in $\mathcal{X}$ \cite{Gergaud:06}. Thanks to the controllability and the existence results, the PMP is applicable to formulate the following necessary conditions.

\subsection{Necessary conditions}

Hereafter, we define by the column vector $\p\in \mathbb{R}^n$ the costate of $\x$.  Then, according to the PMP in Ref.~\cite{Pontryagin},  if an admissible controlled trajectory $\bar{\x}(\cdot):[0,t_f]\rightarrow \mathcal{X}$ associated with a measurable control $(\bar{\rho}(\cdot),\bar{\boldsymbol{\tau}}(\cdot)):[0,t_f]\rightarrow\mathcal{T}$ is an optimal one of the {\it FOP}, there exists a nonpositive real number $\bar{p}^0$ and an absolutely continuous mapping $t\mapsto\bar{\p}^T(\cdot)\in T^*_{\x(\cdot)}\mathcal{X}$ on $[0,t_f]$, satisfying $(\bar{\p}^T(t),\bar{p}^0)\neq 0$ for $t\in[0,t_f]$, such that  the 5-tuple $t\mapsto(\bar{\x}(\cdot),\bar{\p}^T(\cdot),\bar{p}^0,\bar{\rho}(\cdot),\bar{\boldsymbol{\tau}}(\cdot))\in T^*\mathcal{X}\times\mathbb{R}\times\mathcal{T}$ on $[0,t_f]$  is a solution of the canonical differential equations
\begin{eqnarray}
\begin{cases}
\dot{{\x}}(t) = \frac{\partial h}{\partial \p}({\x}(t),{\p}(t),{p}^0,{\rho}(t),{\boldsymbol{\tau}}(t)),\\
\dot{{\p}}(t) = -\frac{\partial h}{\partial \x}({\x}(t),{\p}(t),{p}^0,{\rho}(t),{\boldsymbol{\tau}}(t))),
\end{cases}
\label{EQ:canonical}
\end{eqnarray}
with the maximum condition
\begin{eqnarray}
h(\bar{\x}(t),\bar{\p}(t),\bar{p}^0,\bar{\rho}(t),\bar{\boldsymbol{\tau}}(t)) =\underset{(\rho(t),\boldsymbol{\tau}(t))\in\mathcal{T}}{\mathrm{max}}\ h(\bar{\x}(t),\bar{\p}(t),\bar{p}^0,\rho(t),\boldsymbol{\tau}(t))
\label{EQ:maximum_condition}
\end{eqnarray}
and the transversality condition $\bar{\p}(t_f)\perp T_{\bar{\x}(t_f)}\mathcal{M}$ being satisfied, where 
\begin{eqnarray}
h({\x},{\p},{p}^0,{\rho},{\boldsymbol{\tau}}) = {\p}^T\left[\f_0({\x}) + {\rho} \f_1({\x},{\boldsymbol{\tau}})\right] + {p}^0{\rho}
\label{EQ:Hamiltonian_h}
\end{eqnarray}
is the Hamiltonian.
Note that the transversality condition asserts 
\begin{eqnarray}
\bar{\boldsymbol{p}}^T(t_f) = \bar{\boldsymbol{\nu}} \nabla{ \phi(\bar{\x}(t_f))}  ,
\label{EQ:Transversality}
\end{eqnarray}
where $\bar{\boldsymbol{\nu}}\in(\mathbb{R}^l)^*$ is a constant vector, whose elements are Lagrangian multipliers.

Every 5-tuple $t\mapsto (\x(\cdot),\p^T(\cdot),p^0,\rho(\cdot),\boldsymbol{\tau}(\cdot))$ on $[0,t_f]$, if satisfying Eqs.~(\ref{EQ:canonical}--\ref{EQ:Hamiltonian_h}), is called an extremal. Furthermore, an extremal is called a normal one if $p^0 < 0$ and it is called an abnormal one if $p^0 = 0$. The abnormal extremals were readily ruled out by Gergaud and Haberkorn \cite{Gergaud:06}.  Thus, only normal extremals are considered and $(\p^T,p^0)$ is normalized in such a way that $p^0 = -1$ in this paper.
 According to the maximum condition in Eq.~(\ref{EQ:maximum_condition}), the extremal control $(\rho(\cdot),\boldsymbol{\tau}(\cdot))$ is a function of $(\x(\cdot),\p(\cdot))$ on $[0,t_f]$. Thus, with some abuses of notations, we denote by $(\x(\cdot),\p(\cdot))$ on $[0,t_f]$  in tangent bundle $T\mathcal{X}$ the extremal  and by $H(\x(\cdot),\p(\cdot))$ on $[0,t_f]$ the corresponding maximized Hamiltonian. Then, $H(\x,\p)$  can be written as
 \begin{eqnarray}
 H(\x,\p) := H_0(\x,\p) + \rho(\x,\p) H_1(\x,\p),\nonumber
 \end{eqnarray}
 where $H_0(\x,\p) = \p^T\f_0(\x)$ is the non-thrust Hamiltonian and $H_1(\x,\p) = \p^T\f_1(\x,\boldsymbol{\tau}(\x,\p)) - 1$ is the switching function.

Let us define by $\p_r$, $\p_v$, and $p_m$ the costates of $\r$, $\v$, and $m$, respectively, such that $\p=[\p_r^T\ \p_v^T\ p_m]^T$. Then, the maximum condition in Eq.~(\ref{EQ:maximum_condition}) implies 
\begin{eqnarray}
\begin{cases}\rho =1,\ \ \ \ \  \text{if}\ H_1 > 0,
\\
\rho = 0, \ \ \ \ \ \text{if}\ H_1 < 0,
\end{cases}
\label{EQ:Max_condition1}
\end{eqnarray}
and
\begin{eqnarray}
\boldsymbol{\tau} = \p_v/ \abs{\p_v },\ \  \text{if}\ \ \abs{ \p_v}\neq 0.
\label{EQ:Max_condition2}
\end{eqnarray}
Thus, the optimal direction of the thrust vector $\boldsymbol{u}$ is collinear to $\p_v$, well known as the primer vector~\cite{Lawden:63}. An extremal $(\x(\cdot),\p(\cdot))\in T\mathcal{X}$ on $[0,t_f]$ is called a singular one if $H_1(\x(\cdot),\p(\cdot)) \equiv 0$ on a finite interval $[t_1,t_2]\subseteq[0,t_f]$ with $t_1 < t_2$, and the singular value of $\rho$ can be obtained by repeatedly differentiating the identity $H_1(\x,\p) \equiv 0$ until $\rho$ explicitly appears \cite{Kelley:66}.  It  is called a nonsingular one if the switching function $H_1(\x(\cdot),\p(\cdot))$ on $[0,t_f]$  has either no or only isolated zeros. 

The NOG for a totally singular extremal was studied by Breakwell and Dixon \cite{Breakwell:75}. If the thrust is continuous along a nonsingular extremal, the classical variational method \cite{Speyer:68,Bryson:69,Kelley:64,Kelley:62,Lee:65,Pontani:15:1,Pontani:15:2,Pontani:15:3} can be directly employed to design the NOG. In next section, the NOG for  bang-bang extremals will be established through constructing a parameterized family of extremals.

\section{Neighboring optimal guidance}\label{SE:Sufficient}

Hereafter, we always denote by $(\bar{\x}(\cdot),\bar{\p}(\cdot)):[0,t_f]\rightarrow T\mathcal{X}$ and $(\bar{\rho}(\cdot),\bar{\boldsymbol{\tau}}(\cdot)):[0,t_f]\rightarrow \mathcal{T}$  the nominal extremal and the associated nominal control, respectively, and we assume the nominal extremal is readily computed. 
\begin{definition}[Neighboring extremal]
Let $\mathcal{W}\subseteq T\mathcal{X}$ be a small  tubular neighbourhood of the nominal extremal $(\bar{\x}(\cdot),\bar{\p}(\cdot))$ on $[0,t_f]$, we say every time solution trajectory $(\x(\cdot),\p(\cdot)):[0,t_f]\rightarrow T\mathcal{X}$ of Eqs.~(\ref{EQ:canonical}--\ref{EQ:Transversality}) is a neighboring extremal of the nominal one if $(\x(\cdot),\p(\cdot))$ on $[0,t_f]$ lies in $\mathcal{W}$.
\label{DE:Reference}
\end{definition}
 \noindent In next paragraph, the neighboring extremals will be parameterized.
\subsection{Parameterization of neighbouring extremals}

Let us define a submanifold $\mathcal{L}_f\subset T\mathcal{X}$ as
\begin{eqnarray}
\mathcal{L}_f = \big\{(\x,\p)\in T\mathcal{X}\ \arrowvert \ \x\in\mathcal{M},\ \p\perp T_{\x}\mathcal{M}\big\}.\nonumber
\end{eqnarray}
Then, according to Definition \ref{DE:Reference}, for every neighbouring extremal $(\x(\cdot),\p(\cdot))\in \mathcal{W}$ on $[0,t_f]$, there holds $(\x(t_f),\p(t_f))\in\mathcal{L}_f$. 
Note that the submanifold $\mathcal{L}_f$ is of dimension $n$ once the matrix $\nabla \phi(\bar{\x}(t_f))$ is of full rank.
\begin{assumption}
The matrix $\nabla \phi(\bar{\x}(t_f))$ is of full rank.
\label{AS:full_rank}
\end{assumption}
\noindent As a result of this assumption, let $\mathcal{N}\subset \mathcal{L}_f$ be a sufficiently small open neighbourhood of $(\bar{\x}(t_f),\bar{\p}(t_f))$, there exists an invertible function $\boldsymbol{F}:\mathcal{N}\rightarrow (\mathbb{R}^n)^*$ such that both the function and its inverse $\F^{-1}$ are smooth. Then, for every $\q\in(\mathbb{R}^n)^*$, there exists one and only one $(\x,\p)\in\mathcal{N}$ such that $\q = \F(\x,\p)$.
  Let us define by 
\begin{eqnarray}
\gamma: [0,t_f]\times\F(\mathcal{N})\rightarrow T\mathcal{X}, \ \gamma(t,\q) = (\x(t),\p(t))\nonumber
\end{eqnarray}
the time solution trajectory of Eqs.~(\ref{EQ:canonical}--\ref{EQ:Transversality}) such that $$\boldsymbol{F}^{-1}(\q) = \gamma(t_f,\q),$$ i.e., there holds $\gamma(t_f,\q)\in\mathcal{N}$ for every $\q\in\F(\mathcal{N})$. Then,   let $\bar{\q} = \boldsymbol{F}(\bar{\x}(t_f),\bar{\p}(t_f))$, we have $(\bar{\x}(\cdot),\bar{\p}(\cdot)) = \gamma(\cdot,\bar{\q})$ on $[0,t_f]$. 
\begin{definition}
Given the nominal extremal $(\bar{\x}(\cdot),\bar{\p}(\cdot)) = \gamma(\cdot,\bar{\q})$ on $[0,t_f]$,  we denote by
\begin{eqnarray}
\mathcal{F} = \big\{(\x(t),\p(t))\in T\mathcal{X} \ \arrowvert\ (\x(t),\p(t))=\gamma(t,\q),\ t\in[0,t_f],\ \q \in\boldsymbol{F}(\mathcal{N})\big\}\nonumber
\end{eqnarray}
the $\q$-parameterized family of neighbouring extremals  around the nominal extremal $\gamma(t,\bar{\q})$ on $[0,t_f]$.
\label{DE:parameterized}
\end{definition} 
\noindent 
For the sake of notational clarity, let us define a mapping
\begin{eqnarray}
\Pi:  T\mathcal{X} \rightarrow  \mathcal{X},\  (\x,\p) \mapsto \x\nonumber
\end{eqnarray}
that projects a submanifold from the tangent bundle $T\mathcal{X}$ onto the state space $\mathcal{X}$. 
\begin{definition}[Existence of neighboring extremals]
Given the nominal extremal $(\bar{\x}(\cdot),\bar{\p}(\cdot)) = \gamma(\cdot,\bar{\q})$ on $[0,t_f]$, we say that there exist neighboring extremals around this nominal extremal if, for every $t\in[0,t_f)$ and every $\x_*\in\mathcal{X}\backslash\mathcal{M}$ in an infinitesimal neighborhood  of $\bar{\x}(t)$,  there exists a small subset $\mathcal{N}$ and $\q_*\in\F(\mathcal{N})$ such that $\x_*= \Pi(\gamma(t,\q_*))$.
\label{DE:existence} 
\end{definition}
\noindent  Note that the NOG is constructed by using the Taylor expansion of the neighboring extremal to approximate the corresponding neighboring optimal control. Thus, the existence of neighboring extremals around the nominal extremal  $\gamma(\cdot,\bar{\q})$ on $[0,t_f]$ is a prerequisite to construct the NOG \cite{Lee:65}. In next subsection, through analyzing the projection behavior of the family $\mathcal{F}$ at each time $t\in[0,t_f)$ from $T\mathcal{X}$ onto $\mathcal{X}$, the  conditions for the existence of neighboring extremals around a nominal one with a bang-bang control will be established.


\subsection{Conditions for the existence of neighbouring extremals}\label{SE:Existence}


 Hereafter, we denote by $t_0\in[0,t_f)$ the current time and let $\x_*\in\mathcal{X}\backslash\mathcal{M}$ be the measured (or actual) state of the spacecraft at $t_0$. Generally speaking, there holds 
 $$\Delta \x := \x_* - \bar{\x}(t_0) \neq 0.$$
 Let $t_m^*>0$ be the minimum time  to steer the system $\Sigma$ by measurable controls $(\rho(\cdot),\boldsymbol{\tau}(\cdot)):[0,t^*_m]\rightarrow\mathcal{T}$ from the actual state $\x_*\in\mathcal{X}\backslash\mathcal{M}$ to a point $\x_f\in\mathcal{M}$, if $t_f - t_0 < t^*_m$ for every $\x_f\in\mathcal{M}$, there is even not an admissible controlled trajectory on the time interval $[t_0,t_f]$ connecting $\x_*$ and $\mathcal{M}$ .
\begin{assumption}
There exists at least one point $\x_f\in\mathcal{M}$ such that $t_f - t_0 \geq t^*_m$.
\label{AS:tm}
\end{assumption}
\noindent According to the controllability results in Ref. \cite{Chen:15controllability},  this assumption implies that there exists at least one fuel-optimal trajectory $\hat{\x}(\cdot)\in\mathcal{X}$ on $[t_0,t_f]$ such that $\hat{\x}(t_0) = \x_*$ and $\hat{\x}(t_f)  \in\mathcal{M}$ \cite{Gergaud:06}. However, one cannot use the technique of Taylor expansion to design the NOG unless the fuel-optimal trajectory $\hat{\x}(t)$ is a neighboring extremal such that the higher order terms are negligible. In this subsection, provided that Assumption \ref{AS:tm} is satisfied, we will establish some conditions which, once satisfied, guarantee the existence of neighboring extremals (cf. Definition \ref{DE:existence}) such that the NOG can be constructed.

\begin{proposition}
Given the nominal extremal $(\bar{\x}(\cdot),\bar{\p}(\cdot))=\gamma(\cdot,\bar{\q})$ on $[0,t_f]$, let  Assumption \ref{AS:tm} be satisfied  for every $t_0\in[0,t_f)$ and denote by $\mathcal{O}_{t_0}\subset\mathcal{X}\backslash\mathcal{M}$ an infinitesimal open neighborhood of the point $\bar{\x}(t_0)$. Then, if the point $\bar{\x}(t_0)$ lies on the boundary of the domain $\Pi(\gamma(t_0,\F(\mathcal{N})))$ for a subset $\mathcal{N}\subset\mathcal{L}_f$, no matter how small the neighborhood $\mathcal{O}_{t_0}$ is, there are some $\x_*\in\mathcal{O}_{t_0}\backslash\{\bar{\x}(t_0)\}$ such that $\x_* \not\in \Pi(\gamma(t_0,\F(\mathcal{N})))$, i.e, no neighboring extremals in the family $\mathcal{F}$ restricted to the subset $\mathcal{N}$ can pass through the point $\x_*$ at $t_0$.
\label{PR:Diffeomorphism1}
\end{proposition}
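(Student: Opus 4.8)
The plan is to prove the contrapositive-flavored statement by a direct topological argument about projections of submanifolds. Suppose $\bar{\x}(t_0)$ lies on the boundary $\partial\,\Pi(\gamma(t_0,\F(\mathcal{N})))$ for the chosen subset $\mathcal{N}\subset\mathcal{L}_f$. First I would set up the local picture: by Assumption \ref{AS:full_rank}, $\mathcal{N}$ is an $n$-dimensional submanifold of $T\mathcal{X}$ and $\F:\mathcal{N}\to\F(\mathcal{N})\subset(\mathbb{R}^n)^*$ is a diffeomorphism, so $\q\mapsto\gamma(t_0,\q)$ is a smooth map from the open set $\F(\mathcal{N})\subset(\mathbb{R}^n)^*$ into $T\mathcal{X}$ whose image is an (at most) $n$-dimensional immersed submanifold of $T\mathcal{X}$. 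Composing with $\Pi$ gives a smooth map $G_{t_0}:\F(\mathcal{N})\to\mathcal{X}$, $G_{t_0}(\q)=\Pi(\gamma(t_0,\q))$, with $G_{t_0}(\bar{\q})=\bar{\x}(t_0)$, and $\Pi(\gamma(t_0,\F(\mathcal{N})))=G_{t_0}(\F(\mathcal{N}))$ is the image of an open subset of an $n$-manifold under a smooth map into the $n$-dimensional manifold $\mathcal{X}$.

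The key step is the dichotomy coming from the rank of $DG_{t_0}(\bar{\q})$. If $DG_{t_0}(\bar{\q})$ has full rank $n$, then by the inverse function theorem $G_{t_0}$ is a local diffeomorphism near $\bar{\q}$, hence an open map near $\bar{\q}$, so $G_{t_0}(\F(\mathcal{N}))$ contains an open neighborhood of $\bar{\x}(t_0)$ in $\mathcal{X}$; this contradicts $\bar{\x}(t_0)\in\partial\,\Pi(\gamma(t_0,\F(\mathcal{N})))$, since a point on the boundary of a set cannot be an interior point of that same set. Therefore $DG_{t_0}(\bar{\q})$ is rank-deficient, i.e.\ $\bar{\q}$ is a critical point of $G_{t_0}$. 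Now apply the constant-rank / Sard-type local normal form: in suitable coordinates $G_{t_0}$ near $\bar{\q}$ maps into a submanifold of $\mathcal{X}$ of dimension $\le n-1$, or more carefully, by restricting to a small ball $B\subset\F(\mathcal{N})$ around $\bar{\q}$ and using that the image of a smooth map from an $n$-ball that is critical at the center is ``locally thin'' — in the cleanest version, one argues that if $\bar{\x}(t_0)$ were an interior point of $G_{t_0}(B)$ for every such $B$, then $G_{t_0}$ would be open at $\bar{\q}$, which (by Brouwer's invariance of domain, or by the fact that an open map between $n$-manifolds cannot be critical at a point where it is open — more precisely, invariance of domain forces any continuous injective map to be open, and one reduces to the injective case, or simply invokes that being on the boundary rules out interiority). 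I would phrase this as: $\bar{\x}(t_0)\in\partial\,\Pi(\gamma(t_0,\F(\mathcal{N})))$ means by definition that every neighborhood $\mathcal{O}_{t_0}$ of $\bar{\x}(t_0)$ contains points not in $\Pi(\gamma(t_0,\F(\mathcal{N})))$. Taking any such $\x_*\in\mathcal{O}_{t_0}\setminus\{\bar{\x}(t_0)\}$ with $\x_*\notin\Pi(\gamma(t_0,\F(\mathcal{N})))$, and noting $\x_*\in\mathcal{X}\setminus\mathcal{M}$ after shrinking $\mathcal{O}_{t_0}$ (possible since $\bar{\x}(t_0)\in\mathcal{X}\setminus\mathcal{M}$ and $\mathcal{M}$ is closed), gives exactly the conclusion: there is no $\q_*\in\F(\mathcal{N})$ with $\x_*=\Pi(\gamma(t_0,\q_*))$.

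Actually, re-reading the statement, most of the work above is superfluous: the conclusion follows almost immediately from the \emph{definition} of boundary point. The substantive content is only that $\x_*$ can additionally be taken in $\mathcal{X}\setminus\mathcal{M}$, which holds because $\mathcal{O}_{t_0}$ may be shrunk to avoid the closed set $\mathcal{M}$ (using $t_0<t_f$ and $\bar{\x}(t_0)\notin\mathcal{M}$, guaranteed since the transfer is not yet complete). Hence I would present the proof in two short moves: (i) unwind the definition of $\bar{\x}(t_0)\in\partial\,\Pi(\gamma(t_0,\F(\mathcal{N})))$ to extract, for arbitrarily small $\mathcal{O}_{t_0}$, a point $\x_*\in\mathcal{O}_{t_0}\setminus\Pi(\gamma(t_0,\F(\mathcal{N})))$; (ii) shrink $\mathcal{O}_{t_0}$ so that $\mathcal{O}_{t_0}\cap\mathcal{M}=\emptyset$, so $\x_*\in\mathcal{X}\setminus\mathcal{M}$, and observe that $\x_*\notin\Pi(\gamma(t_0,\F(\mathcal{N})))$ is precisely the assertion that no neighboring extremal in $\mathcal{F}|_{\mathcal{N}}$ passes through $\x_*$ at time $t_0$, since $\Pi(\gamma(t_0,\q_*))=\x_*$ would put $\x_*$ in that image.

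The main obstacle — or rather the only point requiring care — is that the proposition as stated is essentially a definitional unpacking, so the real mathematical force lives in the \emph{companion} result (presumably the next proposition) showing that the rank-deficiency of $DG_{t_0}$ is governed by the Jacobi condition between switching times together with the transversality condition at each switching time; establishing \emph{that} link (computing $DG_{t_0}(\bar{\q})$ via the variational equations along the bang-bang extremal and isolating where its determinant can vanish) is where the geometric optimal control machinery — fields of extremals, the jump conditions on the variational flow at switching times — genuinely enters. For the present proposition I would keep the proof to the short topological argument above and flag that the quantitative criterion comes later.
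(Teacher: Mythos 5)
Your final argument -- unwinding the definition of a boundary point to extract, in every neighborhood $\mathcal{O}_{t_0}$, a point $\x_*\in\mathcal{O}_{t_0}\setminus\Pi(\gamma(t_0,\F(\mathcal{N})))$ through which no extremal of $\mathcal{F}|_{\mathcal{N}}$ passes at $t_0$ -- is exactly the paper's proof, and it is correct; the extra step of shrinking $\mathcal{O}_{t_0}$ away from $\mathcal{M}$ is harmless but unnecessary since the statement already takes $\mathcal{O}_{t_0}\subset\mathcal{X}\backslash\mathcal{M}$. The rank/inverse-function-theorem discussion in your first paragraphs is, as you yourself note, superfluous here (it belongs to the companion Proposition~\ref{PR:Diffeomorphism} and Conditions~\ref{CON:disconjugacy_bang}--\ref{AS:Transversal}), so the proposal matches the paper's approach.
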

\begin{proof}
If  the point $\bar{\x}(t_0)$ lies on the boundary of the domain $\Pi(\gamma(t_0,\F(\mathcal{N})))$, for every open neighborhood $\mathcal{O}_{t_0}\subset\mathcal{X}\backslash\mathcal{M}$ of $\bar{\x}(t_0)$, the set $\mathcal{O}_{t_0}\backslash(\mathcal{O}_{t_0}\cap \Pi(\gamma(t_0,\F(\mathcal{N}))))$ is not empty. Thus, for every $\x_*\in \mathcal{O}_{t_0}\backslash(\mathcal{O}_{t_0}\cap \Pi(\gamma(t_0,\F(\mathcal{N}))))$, there holds $\x_*\not \in \Pi(\gamma(t_0,\F(\mathcal{N})))$, which proves the proposition.
\end{proof}
\noindent If the projection $\Pi$ of $\mathcal{F}$ at $t_0$ is a fold singularity  \cite{Agrachev:04},  the trajectories $\x(\cdot)=\Pi(\gamma(\cdot,\q))$ around $t_0$ intersect with each other as is shown by the typical picture in Figure \ref{Fig:smooth_fold}.
\begin{figure}[!ht]
 \begin{center}
 \includegraphics[trim=1cm 2cm 1cm 0cm, clip=true, width=3.0in,angle=0]{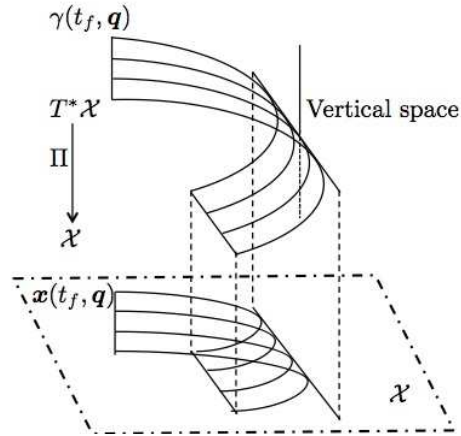}
\end{center}
 \caption[]{The fold singularity for the projection of the family $\mathcal{F}$ \cite{Agrachev:04}.}
 \label{Fig:smooth_fold}
\end{figure}
As is illustrated by the right plot in Figure \ref{Fig:smooth_fold1}, the point $\bar{\x}(t_0) = \Pi(\gamma(t_0,\bar{\q}))$ lies on the boundary of the domian $\Pi(\gamma(t_0,\F(\mathcal{N})))$ for every sufficiently small subset $\mathcal{N}\subset\mathcal{L}_f$ if the projection $\Pi$ of $\mathcal{F}$ at $t_0$ is a fold singularity.
\begin{figure}[!ht]
 \begin{center}
 \includegraphics[trim=0cm 1.8cm 0cm 0cm, clip=true, width=0.8\textwidth]{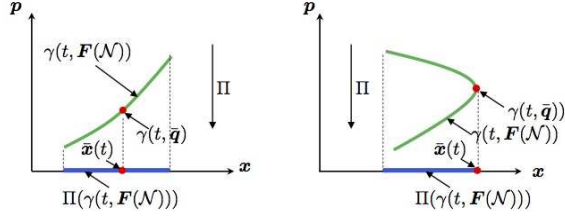}
\end{center}
 \caption[]{The section of the family $\mathcal{F}$ at a time $t\in[0,t_f)$. The projection $\Pi$ in the left plot is a diffeomorphism and the projection $\Pi$ in the right plot is a fold singularity.}
 \label{Fig:smooth_fold1}
\end{figure}
 Consequently, Proposition \ref{PR:Diffeomorphism1} indicates that for some sufficiently small deviation $\Delta \x$ there holds $\bar{\x}(t)+\Delta \x \neq \Pi(\gamma(t,\q))$ for every $\q\in\mathcal{N}$ once the projection $\Pi$ of $\mathcal{F}$ at $t\in[0,t_f)$ is a fold singularity.
\begin{proposition}
Given the nominal extremal $(\bar{\x}(\cdot),\bar{\p}(\cdot))=\gamma(\cdot,\bar{\q})$ on $[0,t_f]$, let Assumption \ref{AS:tm} be satisfied  for every $t_0\in[0,t_f)$ and denote by $\mathcal{O}_{t_0}\subset\mathcal{X}\backslash\mathcal{M}$ an infinitesimal open neighborhood of the point $\bar{\x}(t_0)$. Then, for every $\x_*\in\mathcal{O}_{t_0}$, there exists a $\q_*\in\F(\mathcal{N})$ such that $\x_* = \Pi(\gamma(t_0,\q_*))$  if the projection $\Pi$ of $\mathcal{F}$ at $t_0$ is a diffeomorphism.
\label{PR:Diffeomorphism}
\end{proposition}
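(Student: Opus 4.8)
The plan is to read this proposition as the exact complement of Proposition~\ref{PR:Diffeomorphism1}: there the \emph{boundary} case (the projection being a fold singularity) was treated, and here the \emph{interior} case (the projection being a diffeomorphism) is, and the conclusion should follow from the elementary fact that a diffeomorphism is an open bijection onto its image. Concretely, I would first make the hypothesis precise. By Assumption~\ref{AS:full_rank} the submanifold $\mathcal{L}_f$ has dimension $n$, and $\F^{-1}=\gamma(t_f,\cdot)$ parametrizes the open piece $\mathcal{N}\subset\mathcal{L}_f$ diffeomorphically; composing with the backward flow of Eqs.~(\ref{EQ:canonical}--\ref{EQ:Transversality}) from $t_f$ to $t_0$ and then with $\Pi$ yields a map $\Psi_{t_0}\colon\F(\mathcal{N})\to\mathcal{X}$, $\q\mapsto\Pi(\gamma(t_0,\q))$, whose image is precisely $\Pi(\gamma(t_0,\F(\mathcal{N})))$; this $\Psi_{t_0}$ is what the caption of Figure~\ref{Fig:smooth_fold1} calls ``the projection $\Pi$ of $\mathcal{F}$ at $t_0$''. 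A preliminary step is to observe that, for $\mathcal{N}$ small enough, continuous dependence of solutions of Eqs.~(\ref{EQ:canonical}--\ref{EQ:Transversality}) on the terminal data keeps the backward flow defined on all of $[0,t_f]$, so $\Psi_{t_0}$ is a well-defined $C^1$ map between $n$-dimensional manifolds, with $\Psi_{t_0}(\bar{\q})=\Pi(\bar{\x}(t_0),\bar{\p}(t_0))=\bar{\x}(t_0)$.

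Granting the hypothesis that $\Psi_{t_0}$ is a diffeomorphism onto its image, the argument is then immediate. A diffeomorphism is in particular an open map, so, $\F(\mathcal{N})$ being open, the set $V:=\Pi(\gamma(t_0,\F(\mathcal{N})))=\Psi_{t_0}(\F(\mathcal{N}))$ is open in $\mathcal{X}$ and contains $\bar{\x}(t_0)$, hence is an open neighborhood of $\bar{\x}(t_0)$. Since $\mathcal{O}_{t_0}$ is infinitesimal, i.e.\ may be taken arbitrarily small, we may assume $\mathcal{O}_{t_0}\subseteq V$. Then for every $\x_*\in\mathcal{O}_{t_0}\subseteq V$ the bijectivity of $\Psi_{t_0}$ onto $V$ furnishes a unique $\q_*:=\Psi_{t_0}^{-1}(\x_*)\in\F(\mathcal{N})$ with $\x_*=\Psi_{t_0}(\q_*)=\Pi(\gamma(t_0,\q_*))$, which is the assertion. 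Restricting $\gamma(\cdot,\q_*)$ to $[t_0,t_f]$ produces a neighboring extremal through $\x_*$ that meets $\mathcal{M}$ at $t_f$; Assumption~\ref{AS:tm} is what certifies this to be a genuine admissible connection within the prescribed time, so that — together with Proposition~\ref{PR:Diffeomorphism1}, which rules out the fold case — one obtains the dichotomy required before computing the first-order Taylor expansion of $\gamma(\cdot,\q)$ about $\bar{\q}$.

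I do not expect any real obstacle inside this proposition once the word ``diffeomorphism'' is interpreted as ``local diffeomorphism onto an open image for a sufficiently small $\mathcal{N}$''; the only care needed is (i) shrinking $\mathcal{N}$ so that the backward flow is defined on $[0,t_f]$ and $\Psi_{t_0}$ is injective there, and (ii) nesting the infinitesimal $\mathcal{O}_{t_0}$ inside the open image $V$. The genuine difficulty, which the subsequent subsections must address, is to decide \emph{when} the hypothesis actually holds: to show that $\Psi_{t_0}$ fails to be a diffeomorphism precisely at a fold singularity, which at a non-switching time is excluded by the Jacobi condition and at a switching time by the transversal condition — and, at a switching time, to give a meaning to ``$\Psi_{t_0}$ is a diffeomorphism'' despite $\q\mapsto\gamma(t_0,\q)$ being there only piecewise $C^1$ because the switching instants themselves depend on $\q$.
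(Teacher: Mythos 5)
Your proposal is correct and follows essentially the same route as the paper: you treat the map $\q\mapsto\Pi(\gamma(t_0,\q))$ on $\F(\mathcal{N})$ as an open bijection onto its image, conclude that $\Pi(\gamma(t_0,\F(\mathcal{N})))$ is an open neighborhood of $\bar{\x}(t_0)$, shrink $\mathcal{O}_{t_0}$ inside it, and invert to get the unique $\q_*$ — exactly the paper's argument, which merely phrases the openness/invertibility step via the inverse function theorem. The extra remarks on the backward flow, Assumption~\ref{AS:tm}, and the interpretation of ``diffeomorphism'' are fine but not needed for this proposition.
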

\begin{proof}
If the projection $\Pi$ of $\mathcal{F}$ at $t_0$ is a diffeomorphsim as is shown by the left plot in Figure \ref{Fig:smooth_fold1},  the mapping $\q\mapsto\Pi(\gamma(t_0,\q))$ from the domain $\F(\mathcal{N})$ onto its image is a homeomorphism. 
Note that the subset $\F(\mathcal{N})$ is an open neighborhood of $\bar{\q}$. Thus, under the hypotheses of this proposition, the image $\Pi(\gamma(t_0,\F(\mathcal{N})))$ is an open neighborhood of $\bar{\x}(t_0) = \Pi(\gamma(t_0,\bar{\q}))$ according to the {\it inverse function theorem}.
 Then, there exists a sufficiently small neighborhood $\mathcal{O}_{t_0}\subset\mathcal{X}\backslash\mathcal{M}$ of $\bar{\x}(t_0)$ such that $\mathcal{O}_{t_0}\subset \Pi(\gamma(t_0,\F(\mathcal{N})))$. Referring to the {\it inverse function theorem} again, for every $\x_*\in\mathcal{O}_{t_0}$, there exists one and only one $\q_*\in\F(\mathcal{N})$ such that $\x_* = \Pi(\gamma(t_0,\q_*))$, which proves the proposition.
\end{proof}
\noindent   Note that the projection $\Pi$ of $\mathcal{F}$ loses its local diffeomorphism if it is a fold singularity. Thus, as a combination of Definition \ref{DE:existence} and Propositions \ref{PR:Diffeomorphism1}  and \ref{PR:Diffeomorphism}, to formulate the conditions  for the existence of neighboring extremals around $(\bar{\x}(\cdot),\bar{\p}(\cdot))$ on $[t_0,t_f]$, it is enough to establish the conditions that guarantee the projection $\Pi$ of the family $\mathcal{F}$ at each time $t\in[t_0,t_f)$   is a diffeomorphism. In next paragraph, the conditions related to the projection properties of $\mathcal{F}$ at each time $t\in[0,t_f)$ will be established.

 Without loss of generality, we assume that, from the current time $t_0$ on, there exist $k\in\mathbb{N}$ switching times ${t}_i$ ($i=1,2,\cdots,k$) such that $t_0 < {t}_1<{t}_2<\cdots<{t}_k<t_f$ along the nominal extremal $(\bar{\x}(\cdot),\bar{\p}(\cdot))$ on $[t_0,t_f]$. 
\begin{assumption}
Along the nominal extremal $(\bar{\x}(\cdot),\bar{\p}(\cdot))$ on $[t_0,t_f]$, each switching point $(\bar{\x}({t}_i),\bar{\p}({t}_i))$ is assumed to be a regular one, i.e., ${H}_1(\bar{\x}({t}_i),\bar{\p}({t}_i)) = 0$ and $\dot{H}_1(\bar{\x}({t}_i),\bar{\p}({t}_i)) \neq 0$ for $i=1,2,\cdots,k$.
\label{AS:regular_switching}
\end{assumption}
\noindent As a result of this assumption, if the subset $\mathcal{N}$ is small enough, the $i$-th switching time of the extremals $\gamma(\cdot,\q)$ in $\mathcal{F}$ is a smooth function of $\q$. Thus, we are able to define
\begin{eqnarray}
t_i:\F(\mathcal{N})\rightarrow \mathbb{R},\ \q\mapsto t_i(\q),
\label{EQ:switching_time}
\end{eqnarray}
as the $i$-th switching time of the extremal $\gamma(\cdot,\q)$ on $[t_0,t_f]$ for $\q\in\F(\mathcal{N})$ \cite{Caillau:15}. 

Set 
$$(\x(t,\q),\p(t,\q)) := \gamma(t,\q),\ (t,\q)\in[0,t_f]\times\F(\mathcal{N}).$$
 If the matrix $\frac{\partial \x}{\partial \q}(t,\bar{\q})$ is singular at a time $t\in(t_i,t_{i+1})$, the projection $\Pi$ of the family $\mathcal{F}$ at $t$ is a fold singularity \cite{Agrachev:04,Caillau:15,Chen:153bp,Schattler:12}. 
\begin{condition}
The matrix $\frac{\partial \x}{\partial \q}(\cdot,\bar{\q})$ is invertible on $[t_0,t_f)$, i.e., $\det\left[\frac{\partial \x}{\partial \q}(\cdot,\bar{\q})\right] \neq 0$ on $[t_0,t_f)$.
\label{CON:disconjugacy_bang}
\end{condition}
\noindent This condition is equivalent with the JC \cite{Agrachev:04}. If the subset $\mathcal{N}$ is small enough, this condition guarantees that projection $\Pi$ of the family $\mathcal{F}$ on each subinterval $(t_i,t_{i+1})$ for $i=0,1,\cdots,k$ with $ t_{k+1} = t_f$  is a diffeomorphism, see Refs.~\cite{Caillau:15,Chen:153bp,Schattler:12}. However, this condition is not sufficient to guarantee the  projection $\Pi$ of the family $\mathcal{F}$ on the whole semi-open interval $[t_0,t_f)$ is a diffeomorphism  because there exists another type of fold singularity  near each switching time $t_i$, as is illustrated by Figure \ref{Fig:trans} that the trajectories $\x(t,\q)$ around the switching time $t_i(\q)$ may intersect with each other \cite{Noble:02,Schattler:12}.
 \begin{figure}[!ht]
 \begin{center}
 \includegraphics[trim=1cm 2cm 1cm 0cm, clip = true, width=3.5in]{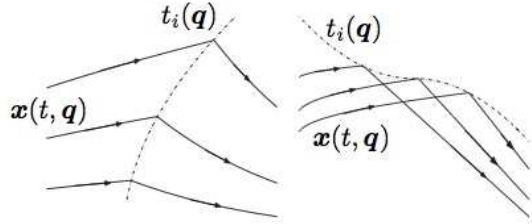}
 \end{center}
 \caption[]{The left plot shows that the projection $\Pi$ of $\mathcal{F}$ is a diffeomorphism around a switching time $t_i(\q)$ and the right plot shows that the projection  $\Pi$ of $\mathcal{F}$ is a fold singularity around a switching time $t_i(\q)$ \cite{Noble:02,Schattler:12}.}
 \label{Fig:trans}
\end{figure} 

Let $t_i^{-}(\q)$ and $t_i^+(\q)$ denote the instants a priori to and after the switching time $t_i(\q)$, respectively. If Condition \ref{CON:disconjugacy_bang} is satisfied, according to Refs.~\cite{Caillau:15,Noble:02},  there exists an inverse function $(t,\x)\mapsto\q(t,\x)$ such that
\begin{eqnarray}
 \psi_i(t,\x) := t - t_i(\q(t,\x)) = 0.\nonumber
\end{eqnarray}
Then, the set 
\begin{eqnarray}
\mathcal{S}_i:=\big\{(t,\x)\ \arrowvert \ \psi_i(t,\x) = 0\big\}\nonumber
\end{eqnarray}
is the switching surface in $(t,\x)$-space.
Obviously, the projection $\Pi$ of $\mathcal{F}$ at the switching time $t_i$ is a diffeomorphism if the flows $(t,\x(t,\q))$ on the sufficiently short interval $[t_i(\q)-\sigma,t_i(\q)+\sigma]$ with $\sigma > 0$ cross the switching surface  $\mathcal{S}_i$  transversally~\cite{Noble:02,Schattler:12,Caillau:15}. As is shown by the left plot in Figure \ref{Fig:trans}, this transversally crossing means that, for every $\q\in\F(\mathcal{N})$, the tangent vectors $\boldsymbol{T}_i^{\pm}(\q)\in(\mathbb{R}^{n+1})^*$ of the flows $(t_i^{\pm}(\q),\x^T(t_i^{\pm}(\q),\q))$, i.e.,
 $$\boldsymbol{T}_i^{\pm}(\q) := (1,\dot{\x}^T(t_i^{\pm}(\q),\q)),$$
  point to the same side of the switching surface $\mathcal{S}_i$, i.e.,
\begin{eqnarray}
\left[\boldsymbol{T}_i^{+}(\q)\boldsymbol{N}_i(\q)\right]\left[\boldsymbol{T}_i^{-}(\q)\boldsymbol{N}_i(\q)\right]>0,
\label{EQ:TC}
\end{eqnarray}
 where the vector $\boldsymbol{N}_i:\F(\mathcal{N})\rightarrow\mathbb{R}^{n+1},\ \boldsymbol{N}_i(\q) = {\nabla \psi_i(t_i(\q),\x(t_i(\q),\q))}$ denotes the normal vector of the switching surface $\mathcal{S}_i$ at $(t_i(\q),\x(t_i(\q),\q))$. In Refs.~\cite{Noble:02,Schattler:12,Caillau:15}, Eq.~(\ref{EQ:TC}) is called as the TC. In contrast,  the projection $\Pi$ of $\mathcal{F}$ at the switching time $t_i$ is a fold singularity if the tangent vectors $\boldsymbol{T}_i^{\pm}(\q)$ point to the two different sides of the switching surface $\mathcal{S}_i$, i.e., 
  \begin{eqnarray}
\left[\boldsymbol{T}_i^{+}(\q)\boldsymbol{N}_i(\q)\right]\left[\boldsymbol{T}_i^{-}(\q)\boldsymbol{N}_i(\q)\right]<0.
\label{EQ:FC}
\end{eqnarray}
According to Theorem 2 in Ref.~\cite{Caillau:15}, the computation of Eq.~(\ref{EQ:TC}) and Eq.~(\ref{EQ:FC}) can be reduced to testing the sign property of $\det\left[\frac{\partial \x}{\partial \q}(t,\q)\right]$, as is presented by the following remark.
\begin{remark}[Chen {\it et al.}~\cite{Caillau:15}]
Assume the subset $\mathcal{N}$ is small enough and that Assumption \ref{AS:regular_switching} is satisfied. Then, for every $\q \in \F(\mathcal{N})$, Eq.~(\ref{EQ:TC}) is satisfied if and only if 
\begin{eqnarray}
\text{det}\left[\frac{\partial \x}{\partial \q}(t_{i}^+({\q}),{\q})\right]\text{det}\left[\frac{\partial \x}{\partial \q}(t_{i}^-({\q}),{\q})\right] > 0.\nonumber
\end{eqnarray}
 And, Eq.~(\ref{EQ:FC}) is satisfied if  and only if
\begin{eqnarray}
\text{det}\left[\frac{\partial \x}{\partial \q}(t_{i}^+(\q),{\q})\right]\text{det}\left[\frac{\partial \x}{\partial \q}(t_{i}^-(\q),{\q})\right] < 0.
\label{EQ:FC1}
\end{eqnarray}
\end{remark}
 \begin{condition}
Let the strict inequality $\text{det}\left[\frac{\partial \x}{\partial \q}(t_{i}^+(\bar{\q}),\bar{\q})\right]\text{det}\left[\frac{\partial \x}{\partial \q}(t_{i}^-(\bar{\q}),\bar{\q})\right] > 0$ be satisfied 
at each switching time ${t}_i$ ($i=1,2,\cdots,k$) of the nominal extremal $(\bar{\x}(\cdot),\bar{\p}(\cdot))$ on $[t_0,t_f]$.
\label{AS:Transversal}
\end{condition} 
\noindent 
According to previous analysis, if Assumption \ref{AS:regular_switching} is satisfied and the subset $\mathcal{N}$ is small enough, Conditions \ref{CON:disconjugacy_bang} and \ref{AS:Transversal} are sufficient to guarantee the projection $\Pi$ of $\mathcal{F}$ at each time $t\in[t_0,t_f)$ is a diffeomorphism. Then, according to  Proposition \ref{PR:Diffeomorphism}, one obtains the following result.
 \begin{corollary}
Given the nominal extremal $(\bar{\x}(\cdot),\bar{\p}(\cdot))$ on $[t_0,t_f]$ such that each switching point is regular (cf. Assumption \ref{AS:regular_switching}), let {\it Assumption} \ref{AS:tm} be satisfied for every $t_0\in[0,t_f)$. Then, for every measured state $\x_*\in\mathcal{X}\backslash\mathcal{M}$ in an infinitesimal neighborhood of $\bar{\x}(t)$,  there exists a $\q_*\in\F(\mathcal{N})$ such that $\x_* = \x(t,\q_*)$ if {\it Conditions} \ref{CON:disconjugacy_bang} and \ref{AS:Transversal} are satisfied.
\label{CO:existence}
\end{corollary}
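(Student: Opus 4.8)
\emph{Proof plan.} The plan is to obtain this corollary as a direct consequence of Proposition~\ref{PR:Diffeomorphism}, once it is shown that under Conditions~\ref{CON:disconjugacy_bang} and \ref{AS:Transversal} the projection $\Pi$ of the family $\mathcal{F}$ at \emph{every} time $t\in[t_0,t_f)$ is a diffeomorphism for a suitably small subset $\mathcal{N}\subset\mathcal{L}_f$. First I would split the semi-open interval $[t_0,t_f)$ into the finitely many open arcs $(t_i,t_{i+1})$, $i=0,1,\dots,k$ (with $t_{k+1}=t_f$), together with the switching instants $t_1,\dots,t_k$. On each open arc, Condition~\ref{CON:disconjugacy_bang} gives $\det[\partial\x/\partial\q(t,\bar\q)]\neq 0$; by continuity of $\gamma$ and of the solution of its variational equation in $(t,\q)$, this determinant remains nonzero on a neighbourhood of the nominal extremal, so the inverse function theorem makes $\q\mapsto\x(t,\q)$ a local diffeomorphism there --- this is precisely the interior Jacobi-condition argument in Refs.~\cite{Caillau:15,Chen:153bp,Schattler:12}.

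Second, at each switching time $t_i$ I would use Assumption~\ref{AS:regular_switching} together with the preceding Remark: Condition~\ref{AS:Transversal} is exactly the inequality $\det[\partial\x/\partial\q(t_i^+(\bar\q),\bar\q)]\,\det[\partial\x/\partial\q(t_i^-(\bar\q),\bar\q)]>0$, which by that Remark is equivalent to the transversality condition~\eqnref{EQ:TC}; that is, the one-sided tangent vectors $\boldsymbol{T}_i^{\pm}(\bar\q)$ of the flow point to the same side of the switching surface $\mathcal{S}_i$, so the flow crosses $\mathcal{S}_i$ transversally. By the field-of-extremals results of Refs.~\cite{Noble:02,Schattler:12,Caillau:15}, transversal crossing makes $\Pi$ of $\mathcal{F}$ at $t_i$ a local diffeomorphism as well; and since the two determinant signs on the two sides of $t_i$ coincide, the local charts built on $(t_{i-1},t_i)$, at $t_i$, and on $(t_i,t_{i+1})$ glue into a single diffeomorphism across $t_i$ --- in particular the fold picture of Eq.~\eqnref{EQ:FC} is excluded.

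Third, to pass from these $t$-local statements to one subset $\mathcal{N}$ valid for all $t\in[t_0,t_f)$ simultaneously, I would run a compactness argument: the interior estimate is uniform on each compact arc $[t_i+\varepsilon,t_{i+1}-\varepsilon]$, the switching-time estimate is uniform on a short collar $[t_i-\sigma,t_i+\sigma]$ around each $t_i$, and finitely many such pieces cover $[t_0,t_f]$, so taking $\mathcal{N}$ with the smallest of the finitely many radii works everywhere. With this $\mathcal{N}$ fixed, the hypotheses of Proposition~\ref{PR:Diffeomorphism} hold at every $t\in[t_0,t_f)$: $\F(\mathcal{N})$ is an open neighbourhood of $\bar\q$, the image $\Pi(\gamma(t,\F(\mathcal{N})))$ is an open neighbourhood of $\bar\x(t)$ by the inverse function theorem, so there is an infinitesimal $\mathcal{O}_t\subset\mathcal{X}\backslash\mathcal{M}$ with $\mathcal{O}_t\subset\Pi(\gamma(t,\F(\mathcal{N})))$, and every measured $\x_*\in\mathcal{O}_t$ equals $\x(t,\q_*)$ for a unique $\q_*\in\F(\mathcal{N})$. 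Assumption~\ref{AS:tm} is what guarantees such a neighbourhood is compatible with actually reaching $\mathcal{M}$ at the final time, which closes the argument.

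The step I expect to be the main obstacle is the patching across the switching times and the uniformity of $\mathcal{N}$. The interior Jacobi analysis is carried out in $\q$-space at a fixed $t$, whereas the transversality analysis lives in $(t,\x)$-space through the switching surface $\mathcal{S}_i$ and the local inverse $\q(t,\x)$; one has to check carefully that the sign-matching in Condition~\ref{AS:Transversal} is precisely what continues the same branch of $\gamma(\cdot,\q)$ past $t_i$ without producing a fold of the type~\eqnref{EQ:FC}, and that the radius of validity can be taken independent of $t$ rather than shrinking as $t$ approaches a switching instant or, a fortiori, as $t\to t_f$ --- the half-open interval $[t_0,t_f)$ in Condition~\ref{CON:disconjugacy_bang} being imposed exactly to keep away from the singularity of the gain matrices at the final time.
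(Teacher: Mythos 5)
Your proposal follows essentially the same route as the paper: the paper obtains the corollary by noting that Condition~\ref{CON:disconjugacy_bang} makes $\Pi$ a diffeomorphism on each arc between switching times (via the cited field-of-extremals results), that Condition~\ref{AS:Transversal}, through the Remark, is equivalent to the transversal-crossing condition~\eqnref{EQ:TC} at each $t_i$ and hence excludes the fold of Eq.~\eqnref{EQ:FC}, and then invokes Proposition~\ref{PR:Diffeomorphism} for a sufficiently small $\mathcal{N}$. Your added compactness/uniformity discussion for choosing one $\mathcal{N}$ valid on all of $[t_0,t_f)$ is a sensible refinement of a point the paper handles implicitly by ``if the subset $\mathcal{N}$ is small enough,'' but it is not a different argument.
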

\noindent  Therefore, the conditions sufficient for the existence of neighboring extremals consist of not only the JC (or Condition \ref{CON:disconjugacy_bang}) between switching times but also the TC (or Condition \ref{AS:Transversal})  at each switching time once the nominal control is discontinuous. 

By applying {\it Theorem 17.2} in Ref.~\cite{Agrachev:04} or the {\it Shadow-Price Lemma} in Refs.~\cite{Noble:02,Schattler:12}, one can directly obtain the following result for optimality.
\begin{theorem}
Given the nominal extremal $(\bar{\x}(\cdot),\bar{\p}(\cdot))$ on $[t_0,t_f]$ such that each switching point is regular (cf. Assumption \ref{AS:regular_switching}), if Conditions  \ref{CON:disconjugacy_bang} and \ref{AS:Transversal} are satisfied, the nominal trajectory $\bar{\x}(\cdot)$ on $[t_0,t_f]$ realizes a  minimum cost of Eq.~(\ref{EQ:cost_functional}) with respect to every  admissible controlled trajectory $\x(\cdot)$ on $[t_0,t_f]$ in $\Pi(\mathcal{F})$ associated with the measurable control $(\rho(\cdot),\boldsymbol{\tau}(\cdot)):[t_0,t_f]\rightarrow \mathcal{T}$ with the same endpoints $\bar{\x}(t_0) = \x(t_0)$ and $\bar{\x}(t_f) = \x(t_f)$, i.e., there holds
\begin{eqnarray}
\int_{t_0}^{t_f}\bar{\rho}(t)\leq \int_{t_0}^{t_f}\rho(t)dt,\nonumber
\end{eqnarray}
where the equality holds if and only if $\bar{\x}(\cdot) = \x(\cdot)$ on $[t_0,t_f]$.
\label{TH:optimality}
\end{theorem}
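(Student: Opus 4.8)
The plan is to realize the $\q$-parameterized family $\mathcal{F}$ as a field of (broken) extremals covering a tubular neighbourhood of the nominal trajectory in $(t,\x)$-space, to manufacture a verification (value) function $S$ out of that field, and then to conclude with the classical Weierstrass/Hamilton--Jacobi inequality; the one nonstandard ingredient is the behaviour of $S$ at the switching surfaces $\mathcal{S}_i$, which is exactly what Condition \ref{AS:Transversal} controls. For the set-up: the terminal manifold $\mathcal{L}_f$ is (the part near $(\bar{\x}(t_f),\bar{\p}(t_f))$ of) the conormal bundle of $\mathcal{M}$, hence a Lagrangian submanifold of the cotangent bundle; since the Hamiltonian flow of $H$ is a symplectomorphism on each bang arc, $\mathcal{L}_t := \gamma(t,\F(\mathcal{N}))$ stays Lagrangian for every $t\in[t_0,t_f]$. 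By Assumption \ref{AS:regular_switching} together with Conditions \ref{CON:disconjugacy_bang} and \ref{AS:Transversal}, Corollary \ref{CO:existence} (equivalently Proposition \ref{PR:Diffeomorphism}) says that $\Pi$ restricted to $\mathcal{L}_t$ is a diffeomorphism onto an open neighbourhood of $\bar{\x}(t)$ for every $t\in[t_0,t_f)$. Hence on a tubular neighbourhood $\mathcal{V}$ of $\{(t,\bar{\x}(t))\mid t\in[t_0,t_f)\}$ there is a single-valued inverse $(t,\x)\mapsto\q(t,\x)$, a costate field $\p(t,\x)$ defined by $(\x,\p(t,\x))\in\mathcal{L}_t$, and the associated extremal control $(\rho(t,\x),\bar{\boldsymbol{\tau}}(t,\x))$, all smooth off the surfaces $\mathcal{S}_i$ and (each extremal's costate being absolutely continuous and the parameterization continuous) continuous across them.

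Next I would build the verification function. Set $S(t,\x) := \int_t^{t_f}\rho\big(\gamma(s,\q(t,\x))\big)\,ds$, the fuel-to-go along the field extremal issuing from $(t,\x)$; it is continuous on $\mathcal{V}$ because $\q(t,\x)$ is, and $S(t_f,\cdot)\equiv 0$ on $\mathcal{M}$, consistently with the transversality condition Eq.~(\ref{EQ:Transversality}). On each slab of $\mathcal{V}$ between two consecutive surfaces $\mathcal{S}_i$, the Lagrangian property of $\mathcal{L}_t$ makes the Poincar\'e--Cartan form $\p(t,\x)^T d\x - H(\x,\p(t,\x))\,dt$ a closed, hence exact, $1$-form; integrating it and matching the terminal data gives $\partial_\x S = -\p(t,\x)$ and $\partial_t S = H(\x,\p(t,\x))$ there, so $S$ solves the Hamilton--Jacobi equation off the $\mathcal{S}_i$. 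The step where Condition \ref{AS:Transversal} is indispensable is the claim that $S$ is globally $C^1$ --- or at least Lipschitz with an a.e. valid Hamilton--Jacobi equation --- across each $\mathcal{S}_i$: transversal crossing (Eq.~(\ref{EQ:TC})) is precisely what keeps the two adjoining bang-arc pieces of the field single-valued near $\mathcal{S}_i$ and makes their one-sided derivatives join without introducing a jump in $S$. This is the content of the \emph{Shadow-Price Lemma} of Refs.~\cite{Noble:02,Schattler:12} (and of Theorem~17.2 of Ref.~\cite{Agrachev:04}), and it fails --- the field folds and $S$ becomes multivalued --- exactly in the case Eq.~(\ref{EQ:FC}).

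Then I would run the comparison. Let $\x(\cdot)$ on $[t_0,t_f]$ be any admissible controlled trajectory in $\Pi(\mathcal{F})$ with control $(\rho(\cdot),\boldsymbol{\tau}(\cdot))$ and $\x(t_0)=\bar{\x}(t_0)$, $\x(t_f)=\bar{\x}(t_f)\in\mathcal{M}$. For every $t$ outside the finitely many crossings of switching surfaces one computes $\tfrac{d}{dt}S(t,\x(t)) = \partial_t S + \partial_\x S\cdot\dot{\x} = H(\x,\p(t,\x)) - \p(t,\x)^T\f(\x,\rho,\boldsymbol{\tau})$, so that $\rho(t) + \tfrac{d}{dt}S(t,\x(t)) = \mathcal{E}(t,\x(t),\rho(t),\boldsymbol{\tau}(t))$, where $\mathcal{E} := \rho + H(\x,\p(t,\x)) - \p(t,\x)^T\f(\x,\rho,\boldsymbol{\tau})$ is exactly the Weierstrass excess $H - h$. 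Its nonnegativity $\mathcal{E}\ge 0$ is nothing but the PMP maximum condition Eq.~(\ref{EQ:maximum_condition}) evaluated at the field costate $\p(t,\x)$, with $\mathcal{E}=0$ iff $(\rho,\boldsymbol{\tau})$ is the field control. Because $S$ is continuous across the $\mathcal{S}_i$, integrating over $[t_0,t_f]$ and using $S(t_f,\x(t_f))=0$ and $S(t_0,\bar{\x}(t_0))=\int_{t_0}^{t_f}\bar{\rho}(t)\,dt$ (the field extremal through $(t_0,\bar{\x}(t_0))$ is the nominal one $\gamma(\cdot,\bar{\q})$) yields $\int_{t_0}^{t_f}\rho(t)\,dt = \int_{t_0}^{t_f}\bar{\rho}(t)\,dt + \int_{t_0}^{t_f}\mathcal{E}\,dt \ge \int_{t_0}^{t_f}\bar{\rho}(t)\,dt$, and equality forces $\mathcal{E}\equiv 0$, i.e.\ $(\rho,\boldsymbol{\tau})$ coincides a.e.\ with the field control, whence $\x(\cdot)\equiv\bar{\x}(\cdot)$ on $[t_0,t_f]$.

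The crux --- and the only place the argument departs from the textbook Weierstrass verification --- is the middle step: showing $S$ is well-defined and regular enough across the switching surfaces. The classical theory presupposes a continuous optimal control, so that $S\in C^1$ comes for free; for a bang-bang extremal one must instead check that near each $\mathcal{S}_i$ the two bang-arc pieces of the field glue into a single-valued map and that the jumps of $\partial_t S = H$ and of the $\mathcal{S}_i$-normal derivative of $S$ are mutually consistent so that $S$ itself does not jump. Carrying this out is precisely the Shadow-Price Lemma computation, and it goes through exactly because Condition \ref{AS:Transversal} (Eq.~(\ref{EQ:TC}), tested through the sign criterion of the Remark preceding it) holds at every switching time; the rest is the routine Hamilton--Jacobi/Weierstrass bookkeeping above.
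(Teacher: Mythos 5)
Your argument is correct and is essentially the paper's own route: the paper proves Theorem \ref{TH:optimality} by directly invoking Theorem 17.2 of Ref.~\cite{Agrachev:04} and the Shadow-Price Lemma of Refs.~\cite{Noble:02,Schattler:12}, and your proposal simply unpacks the content of those results (Lagrangian field of broken extremals, value function solving Hamilton--Jacobi off the switching surfaces, transversal crossing from Condition \ref{AS:Transversal} keeping the field single-valued there, then the Weierstrass excess comparison). No gap beyond the gluing step you correctly delegate to the cited Shadow-Price Lemma.
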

\noindent Consequentely, {\it Conditions} \ref{CON:disconjugacy_bang} and \ref{AS:Transversal} are also sufficient to guarantee the local optimizer or the absence of conjugate points on the nominal extremal $(\bar{\x}(\cdot),\bar{\p}(\cdot))$ on $[t_0,t_f]$. Note that a conjugate point, beyond which the reference extremal loses its local optimality, occurs at the switching time $t_i(\q)$ of the extremal $\gamma(\cdot,\q)$ on $[t_0,t_f)$ if Eq.~(\ref{EQ:FC1}) is satisfied~\cite{Caillau:15}.
\begin{remark}
Notice that the matrix $\left[{\partial \p(t_i^{\pm},\bar{\q})}/{\partial \q}\right]\left[{\partial \x(t_i^{\pm},\bar{\q})}/{\partial \q}\right]^{-1}$ can keep bounded even though Eq.~(\ref{EQ:FC1}) is satisfied. Hence, the classical variational method \cite{Bryson:69}, which detects conjugate points through testing the unbounded time of the matrix $\left[{\partial \p(t,\bar{\q})}/{\partial \q}\right]\left[{\partial \x(t,\bar{\q})}/{\partial \q}\right]^{-1}$, fails to detect the occurrence of  conjugate points at switching times. One has to test Eq.~(\ref{EQ:FC1}) at each switching time  to see if a conjugate point occurs at the switching time.
\end{remark}


\subsection{Neighbouring optimal feedback control law}

As is explained in Sect. \ref{SE:Introduction}, a spacecraft cannot exactly move on the nominal trajectory $\bar{\x}(\cdot) = \Pi(\gamma(\cdot,\bar{\q}))$ on $[0,t_f]$.  According to Corollary \ref{CO:existence}, if Conditions \ref{CON:disconjugacy_bang} and \ref{AS:Transversal} are satisfied and the deviation $\Delta \x$ is small enough,  there then exists a  $\q_*\in\F(\mathcal{N})$ such that $\bar{\x}(t_0) + \Delta \x = \Pi(\gamma(t_0,\q_*))$. Obviously, once the new extremal $\gamma(\cdot,\q_*)$ on the interval $[t_0,t_f]$ is computed, if no further perturbations occur  for $t>t_0$, the spacecraft can be steered by the associated new optimal control function $\u(\gamma(\cdot,\q_*))$ on $[t_0,t_f]$ to fly to $\mathcal{M}$. Though various numerical methods, e.g., direct ones, indirect ones, and hybrid ones, are available in the literature to compute $\gamma(\cdot,\q_*)$ on $[t_0,t_f]$, the onboard computer can  merely afford this computation in each guidance cycle, especially for the low-thrust orbital transfer problem with a long duration. 

Next, the neighboring optimal feedback control strategy, which is the first-order Taylor expansion of the optimal control $\u(\gamma(\cdot,\q_*))$ on $[t_0,t_f]$, will be derived such that the spacecraft can be controlled to move closely enough along the extremal trajectory $\x(\cdot,\q_*)= \Pi(\gamma(\cdot,\q_*))$ on $[t_0,t_f]$ if the deviation $\Delta \x$ is small enough.

\subsubsection{Neighboring optimal feedback on switching times}

Note that  $t_i(\q_*)$ is exactly the $i$-th switching time of the new extremal $\gamma(\cdot,\q_*)$ on $[t_0,t_f]$. Set $\Delta \q := \q_* - \bar{\q}$, the first-order Taylor expansion of $t_i(\q_*)$ is
\begin{eqnarray}
\Delta t_i :=t_i(\q_*)  - t_i(\bar{\q}) = \frac{dt_i(\bar{\q})}{d\q}\Delta \q^T + O_{t_i}(\abs{\Delta\q}^2),
\label{EQ:Delta_ti}
\end{eqnarray}
where $O_{t_i}(\abs{\Delta\q}^2)$ is the sum of second and higher order terms. Note that there holds $$H_1(\x(t_i(\q),\q),\p(t_i(\q),\q)) \equiv 0$$ for every $\q\in\F(\mathcal{N})$. Differentiating the identity $H_1(\x(t_i(\q),\q),\p(t_i(\q),\q)) \equiv 0$ with respect to $\q$ yields
\begin{eqnarray}
0  &=& \frac{\partial H_1(\gamma(t_i(\q),\q))}{\partial \x^T} \left(\dot{\x}(t_i(\q),\q)\frac{dt_i(\q)}{d\q} + \frac{\partial \x(t_i(\q),\q)}{\partial \q}\right) \nonumber\\
&+& \frac{\partial H_1(\gamma(t_i(\q),\q))}{\partial \p^T} \left({\dot{ \p}}(t_i(\q),\q)\frac{dt_i(\q)}{d\q} + \frac{\partial \p(t_i(\q),\q)}{\partial \q}\right)  \nonumber\\
&=&\dot{H}_1(\gamma(t_i(\q),\q))\frac{dt_i(\q)}{d\q} + \frac{\partial H_1(\gamma(t_i(\q),\q))}{\partial \x^T}\frac{\partial \x(t_i(\q),\q)}{\partial \q}
\nonumber\\
&+& \frac{\partial H_1(\gamma(t_i(\q),\q))}{\partial \p^T}  \frac{\partial \p(t_i(\q),\q)}{\partial \q}.
 \label{EQ:variation_H1_ti}
\end{eqnarray}
Note that $\dot{H}_1(\bar{\x}(t_i),\bar{\p}(t_i)) \neq 0$ by Assumption \ref{AS:regular_switching}, one obtains
\begin{eqnarray}
\frac{d t_i(\bar{\q})}{d \q} &=& -\Big[ \frac{\partial H_1(\bar{\x}(t_i),\bar{\p}(t_i))}{\partial \x^T}\frac{\partial \x(t_i,\bar{\q})}{\partial \q}\nonumber\\
& +& \frac{\partial H_1(\bar{\x}(t_i),\bar{\p}(t_i))}{\partial \p^T}\frac{\partial \p(t_i,\bar{\q})}{\partial \q}\Big]/\dot{H}_1(\bar{\x}(t_i),\bar{\p}(t_i)),
\label{EQ:variation_switching_time}
\end{eqnarray}
where two vectors $$\frac{\partial H_1}{\partial \x^T}(\bar{\x}(t_i),\bar{\p}(t_i)) = \bar{\p}^T(t_i)\cdot \frac{\partial \f_1}{\partial \x}(\bar{\x}(t_i),\boldsymbol{\tau}(\bar{\x}(t_i),\bar{\p}(t_i))),$$ 
$$\frac{\partial H_1}{\partial \p^T}(\bar{\x}(t_i),\bar{\p}(t_i)) = \f_1^T(\bar{\x}(t_i),\boldsymbol{\tau}(\bar{\x}(t_i),\bar{\p}(t_i))),$$ can be directly computed once  the nominal extremal $(\bar{\x}(\cdot),\bar{\p}(\cdot)) = \gamma(\cdot,\bar{\q})$ on $[t_0,t_f]$ is given. 

According to Corollary \ref{CO:existence},  for every sufficiently small $\Delta \x$ and every time $t_0\in[0,t_f)$, one has the following first-order Taylor expansion
\begin{eqnarray}
\Delta \q^T &=& \q_*^T - \bar{\q}^T\nonumber\\
&=& \left[\frac{\partial \x(t_0,\bar{\q})}{\partial \q}\right]^{-1}\Delta \x+ O_{\q}(\abs{\Delta \x}^2),
\label{EQ:Delta_q}
\end{eqnarray}
where $O_{\q}(\abs{\Delta \x}^2)$ denotes the sum of second and higher order terms. 
For notational clarity, let us define a matrix-valued function $S:[t_0,t_f]\rightarrow \mathbb{R}^{n\times n}$ as
\begin{eqnarray}
S(t) = \frac{\partial  \p}{\partial \q}(t,\bar{\q})\left[\frac{\partial \x}{\partial \q}(t,\bar{\q})\right]^{-1}.
\label{EQ:S}
\end{eqnarray}
Set
\begin{eqnarray}
\Delta \x_i := \frac{\partial \x}{\partial \q}(t_i,\bar{\q})\left[\frac{\partial \x}{\partial \q}(t_0,\bar{\q})\right]^{-1}\Delta \x,
\end{eqnarray}
it is clear that $\Delta \x_i$ is the first-order term of the deviation $\x(t_i,\q_*)-\bar{\x}(t_i)$. Substituting Eq.~(\ref{EQ:Delta_q}) and Eq.~(\ref{EQ:variation_switching_time}) into Eq.~(\ref{EQ:Delta_ti}), one gets 
\begin{eqnarray}
\Delta t_i = &-&\left[\frac{\partial H_1(\bar{\x}(t_i),\bar{\p}(t_i))}{\partial \x^T} + \frac{\partial H_1(\bar{\x}(t_i),\bar{\p}(t_i))}{\partial \p^T} S(t_i) \right] \Delta \x_i /\dot{H}_1(\bar{\x}(t_i),\bar{\p}(t_i))\nonumber\\
& +&\frac{d t_i(\bar{\q})}{d\q} O_{\q}(\abs{\Delta \x}^2)   + O_{t_i}(\abs{\Delta \q}^2) .\nonumber
\end{eqnarray}
Let
\begin{eqnarray}
\delta t_i := -\left[\frac{\partial H_1(\bar{\x}(t_i),\bar{\p}(t_i))}{\partial \x^T} + \frac{\partial H_1(\bar{\x}(t_i),\bar{\p}(t_i))}{\partial \p^T} S(t_i) \right] \Delta \x_i /\dot{H}_1(\bar{\x}(t_i),\bar{\p}(t_i)),
\label{EQ:delta_ti}
\end{eqnarray}
be the first-order term of $\Delta t_i$. Then, if $\Delta \x$ is infinitesimal,  it suffices to use $t_i + \delta t_i$  as the neighboring optimal feedback on switching times. 

Note that there may exist some profiles of the switching function $H_1(\bar{\x}(\cdot),\bar{\p}(\cdot))$ as is shown by the solid line in Figure \ref{Fig:H1_time}. Then, a small perturbation may result in the change on the number of switching times, as is illustrated by the two dashed lines in Figure \ref{Fig:H1_time}.
\begin{figure}[!ht]
 \begin{center}
 \includegraphics[trim=1cm 2.5cm 1cm 0cm, clip = true, width=4.0in,angle=0]{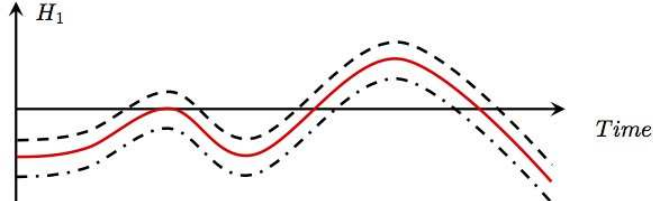}
 \end{center}
 \caption[]{Variations of switching times  with respect to initial perturbations.}
 \label{Fig:H1_time}
\end{figure}
However, Eq.~(\ref{EQ:delta_ti}) is unable to provide the feedback on switching times if the number of switching times on the neighboring extremal is different from that of the nominal one. Set $   [\Delta \p_r^T\ \Delta \p_v^T\ \Delta p_m]^T:=\Delta \p =\p(t_0,\q_*) - \p(t_0,\bar{\q})$ and $[\Delta \r^T\ \Delta \v^T\ \Delta m]^T := \Delta \x$.  According to Eq.~(\ref{EQ:Max_condition1}),  the switching function $H_1(\x_*,\bar{\p}(t_0) + \Delta \p)$ gives a natural feedback on the optimal thrust magnitude of the new extremal $\gamma(\cdot,\q_*)$ at $t_0$, i.e.,
\begin{eqnarray}
\rho&=& \Big[1+ \mathrm{sgn}\big(H_1(\bar{\x}(t_0)+\Delta \x,\bar{\p}(t_0)+\Delta \p)\big)\Big]/2 \nonumber\\
&=& \Big[1+ \mathrm{sgn}\Big(\frac{\|\bar{\p}_v(t_0) + \Delta \p_v\|}{\bar{m}(t_0) + \Delta m} u_{max}- \beta (\bar{p}_m(t_0) + \Delta p_m) u_{max} - 1\Big)\Big]/2,\nonumber
\end{eqnarray}
where $\mathrm{sgn}(\cdot)$  is the typical sign function. Thus, instead of using the first order term $\delta t_i$ in Eq.~(\ref{EQ:delta_ti}) to approximate switching times, one can directly check the sign of the switching function $H_1(\x_*,\bar{\p}(t_0)+\Delta \p)$ to generate the optimal thrust magnitude once $\Delta \p_v$ and $\Delta p_m$ are computed. The first-order Taylor expansion of $\p(t_0,\q_*)$ around $\bar{\q}$ is
\begin{eqnarray}
\Delta \p =\p(t_0,\q_*) - \p(t_0,\bar{\q}) = \frac{\partial \p}{\partial \q}(t_0,\bar{\q})\Delta \q^T + O_{\p}(\abs{ \Delta \q }^2),
\label{EQ:Delta_p0}
\end{eqnarray}
where $O_{\p}(\abs{ \Delta \q }^2)$ is the sum of second and higher order terms. 
  Substituting Eq.~(\ref{EQ:Delta_q}) into Eq.~(\ref{EQ:Delta_p0}) leads to
\begin{eqnarray}
 \Delta \p =  S(t_0)\Delta \x + \frac{\partial \p(t_0,\bar{\q})}{\partial \q} O_{\q}(\abs{ \Delta \x }^2) + O_{\p}(\abs{\Delta \q}^2).
 \label{EQ:Delta_p}
\end{eqnarray}
Denote by  $S_1\in\mathbb{R}^{3\times 7}$ the first three rows, $S_2\in\mathbb{R}^{3\times 7}$ the forth to sixth rows, and $S_3\in(\mathbb{R}^{7})^*$ the last row of the gain matrix $S$ such that 
\begin{eqnarray}
S = \left(\begin{array}{c}S_1\\
S_2\\
S_3
\end{array}\right).\nonumber
\end{eqnarray}
It is clear that $S_2(t_0)\Delta \x$ and $S_3(t_0)\Delta \x$  are the first order terms of $\Delta \p_v$ and $\Delta p_m$, respectively. Thus, if $\Delta \x$ is small enough, it is sufficient to use
\begin{eqnarray}
\rho &=& \Big[ 1+ sgn\Big(\frac{\|\bar{\p}_v(t_0) + S_2(t_0)\Delta \x \|}{\bar{m}(t_0) + \Delta m} u_{max} \nonumber\\
&-& \beta (\bar{p}_m(t_0) + S_3(t_0)\Delta \x ) u_{max} - 1\Big)\Big]/2,
\label{EQ:Thrust_magnitude}
\end{eqnarray}
as the neighboring optimal feedback on thrust magnitude. 

\subsubsection{Neighbouring optimal feedback on thrust direction}

According to Eq.~(\ref{EQ:Max_condition2}), if $\|\bar{\p}_v(t_0) + \Delta \p_v\|\neq 0$, the optimal thrust direction on the new extremal $\gamma(\cdot,\q_*)$ at $t_0$ is
\begin{eqnarray}
\boldsymbol{\tau}(\x_*,\bar{\p}(t_0)+\Delta \p) = \frac{\bar{\p}_v(t_0) + \Delta \p_v }{\|\bar{\p}_v(t_0) + \Delta \p_v\|}.\nonumber
\end{eqnarray}
 Analogously, assume $\Delta \x$ is infinitesimal, if $\|\bar{\p}_v(t_0) + S_2(t_0)\Delta \x \|\neq 0$, we can use
\begin{eqnarray}
\boldsymbol{\tau}(\x_*,\bar{\p}(t_0)+\delta \p(t_0)) = \frac{\bar{\p}_v(t_0) + S_2(t_0)\Delta \x }{\|\bar{\p}_v(t_0) + S_2(t_0)\Delta \x \|},
\label{EQ:Thrust_direction}
\end{eqnarray}
as the neighboring optimal feedback on the thrust direction.
\begin{remark}
One advantage of using Eq.~(\ref{EQ:Thrust_magnitude}) and Eq.~(\ref{EQ:Thrust_direction}) to generate neighboring optimal feedbacks is that  only  $4/7$ instead of the whole block of the time-varying gain matrix $S(\cdot)$ on $[0,t_f)$ is required to store in the onboard computer.
\label{RE:remark2}
\end{remark}


\section{Numerical implementations of the NOG}\label{SE:Implementation}

Once the perturbation $\Delta \x$ is measured  at  $t_0\in[0,t_f)$, it amounts to compute the two matrices $\frac{\partial \x}{\partial \q}(t_0,\bar{\q})$ and $\frac{\partial \p}{\partial \q}(t_0,\bar{\q})$ in order to compute the neighbouring optimal feedbacks in Eq.~(\ref{EQ:Thrust_magnitude}) and Eq.~(\ref{EQ:Thrust_direction}).

\subsection{Differential equations for $\frac{\partial \x}{\partial \q}(t,\bar{\q})$ and $\frac{\partial \p}{\partial \q}(t,\bar{\q})$}
 It follows from the classical results about solutions to ordinary differential equations that the trajectory $(\x(\cdot,\q),\p(\cdot,\q))$ and its time derivative $(\dot{\x}(\cdot,\q),\dot{\p}(\cdot,\q))$ on $[t_0,t_f]$ are continuously differentiable with respect to $\q$. Thus, taking derivative of Eq.~(\ref{EQ:canonical}) with respect to $\q$ on each subinterval $(t_i,t_{i+1})$ yields the homogeneous linear matrix differential equations
\begin{eqnarray}
\left[\begin{array}{c}
\frac{d}{dt}\frac{\partial \x}{\partial\q}(t,\bar{\q})\\
\frac{d}{dt}\frac{\partial \p}{\partial \q}(t,\bar{\q})
\end{array}
\right]
=
\left[
\begin{array}{cc}
H_{\p\x}(\bar{\x}(t),\bar{\p}(t)) &
H_{\p\p}(\bar{\x}(t),\bar{\p}(t))    \\
-H_{\x\x}(\bar{\x}(t),\bar{\p}(t))  &
-H_{\x\p}(\bar{\x}(t),\bar{\p}(t)) 
\end{array}
\right]
\left[\begin{array}{c}
\frac{\partial \x}{\partial\q}(t,\bar{\q})\\
\frac{\partial \p}{\partial \q}(t,\bar{\q})
\end{array}
\label{EQ:Homogeneous_matrix}
\right].
\end{eqnarray} 
Substituting the maximum condition in Eq.~(\ref{EQ:Max_condition2}) and the system dynamics in Eq.~(\ref{EQ:Sigma}) into the maximized Hamiltonian $H$, a direct derivation yeilds
\begin{eqnarray}
H_{\p\x} = H_{\x\p}^T = \left[\begin{array}{ccc}\boldsymbol{0}_3 & I_3& \boldsymbol{0}_{3\times 1}\\
-\mu\frac{I_3 \| \r \|^2 - 3 \r  \r^T}{\| \r \|^5} & \boldsymbol{0}_3 &- \rho \frac{\p_v}{\| \p_v\| m^2}\\
\boldsymbol{0}_{1\times 3} & \boldsymbol{0}_{1\times 3} & 0 \end{array}\right],\nonumber
\end{eqnarray}
\begin{eqnarray}
H_{\p\p} = \left[\begin{array}{ccc}\boldsymbol{0}_3 & \boldsymbol{0}_3& \boldsymbol{0}_{3\times 1}\\
\boldsymbol{0}_3 &\rho u_{max} \frac{I_3 \| \p_v\|^2 - \p_v \p_v^T}{m \| \p_v \|^3} &  \boldsymbol{0}_{3\times 1}\\
\boldsymbol{0}_{1\times 3} & \boldsymbol{0}_{1\times 3} & 0 \end{array}\right],\ \text{and}\nonumber
\end{eqnarray}
\begin{eqnarray}
H_{\x\x} = \left[\begin{array}{ccc}
-3\mu \frac{3(\p_v \r^T)\| \r\|^2 - 5 (\p_v^T\r)(\r\r^T)}{\| \r \|^7}& \boldsymbol{0}_3& \boldsymbol{0}_{3\times 1}\\
\boldsymbol{0}_3 &\boldsymbol{0}_3 &  \boldsymbol{0}_{3\times 1}\\
\boldsymbol{0}_{1\times 3} & \boldsymbol{0}_{1\times 3} & 2\rho u_{max}\frac{\| \p_v \|}{m^3} \end{array}\right],\nonumber
\end{eqnarray}
where $\boldsymbol{0}_i$ and $I_i$ denote the zero and the identity matrices of $\mathbb{R}^{i\times i}$, respectilvey, and $\boldsymbol{0}_{i\times j}$ denotes the zero matrix of $\mathbb{R}^{i\times j}$.  The two matrices $\frac{\partial \x}{\partial \q}(t,\bar{\q})$ and $\frac{\partial \boldsymbol{p}}{\partial \q}(t,\bar{\q})$ are discontinuous at  each switching time $t_i$ ($i=1,2,\cdots,k$). By virtue of Lemma 2.6 in Ref.~\cite{Noble:02}, the updating formulas for the two matrices at each switching time $t_i$ are 
\begin{eqnarray}
\begin{cases}
\frac{\partial \x}{\partial \q}(t_i^+,\bar{\q}) = \frac{\partial \x}{\partial \q}(t_i^-,\bar{\q})  - \Delta \rho_i \f_1(\bar{\x}(t_i),\bar{\boldsymbol{\tau}}(t_i))\frac{\partial t_i(\bar{\q})}{\partial \q},\\
\frac{\partial \p}{\partial \q}(t_i^+,\bar{\q}) = \frac{\partial \p}{\partial \q}(t_i^-,\bar{\q}) + \Delta \rho_i \frac{\partial \f_1}{\partial \x}(\bar{\x}(t_i),\bar{\boldsymbol{\tau}}(t_i))\bar{\p}(t_i)\frac{\partial t_i(\bar{\q})}{\partial \q},
\end{cases}
\label{EQ:update_formula}
\end{eqnarray}
where $\Delta \rho_i = \bar{\rho}(t_i^+) - \bar{\rho}(t_i^-)$,
$$\frac{\partial \f_1}{\partial \x} = \left[\begin{array}{ccc}
\boldsymbol{0}_3 & \boldsymbol{0}_3 & \boldsymbol{0}_{3\times 1}\\
\boldsymbol{0}_3 & \boldsymbol{0}_3 & -u_{max}\frac{\p_v}{\| \p_v\| m^2}\\
\boldsymbol{0}_{1\times3} & \boldsymbol{0}_{1\times3} &0
\end{array}\right],$$
and $d t_i(\bar{\q})/d\q$ can be computed by using Eq.~(\ref{EQ:variation_switching_time}).

Once the initial values $\frac{\partial \x}{\partial \q}(t_f,\bar{\q})$ and $\frac{\partial \p}{\partial \q}(t_f,\bar{\q})$ are given,   the two matrices $\frac{\partial \x}{\partial \q}(t,\bar{\q})$ and $\frac{\partial \boldsymbol{p}}{\partial \q}(t,\bar{\q})$ for $t\in[t_0,t_f]$ can be computed by integrating the differential equations in Eq.~(\ref{EQ:Homogeneous_matrix}) between switching times and by using the updating formulas in Eq.~(\ref{EQ:update_formula}) at each switching time.

\subsection{Computation of $\frac{\partial \x}{\partial \q}(t_f,\bar{\q})$ and $\frac{\partial \p}{\partial \q}(t_f,\bar{\q})$}

 Typically, the sweep variables are used to compute the initial values $\frac{\partial \x}{\partial \q}(t_f,\bar{\q})$ and $\frac{\partial \p}{\partial \q}(t_f,\bar{\q})$~\cite{Bryson:69,Hull:03,Schattler:12}. Note that the matrix
 \begin{eqnarray}
\left[\frac{\partial \x}{\partial \q}(t_f,\bar{\q}),
\frac{\partial \p}{\partial \q}(t_f,\bar{\q})\right] = \frac{d\F^{-1}(\bar{\q})}{d\q}\nonumber
\end{eqnarray}
is a set of basis vectors of the tangent space $T_{\bar{\z}_f}\mathcal{N}$ at $\bar{\z}_f=(\bar{\x}(t_f),\bar{\p}(t_f))$. Thus, to compute the initial values $\frac{\partial \x}{\partial \q}(t_f,\bar{\q})$ and $\frac{\partial \p}{\partial \q}(t_f,\bar{\q})$, it amounts to compute a basis of the tangent space $T_{\bar{\z}_f}\mathcal{N}$ at $\bar{\z}_f=(\bar{\x}(t_f),\bar{\p}(t_f))$.

\subsubsection{Initial values for the case of $l=n$}
If $l=n$, the final state is fixed since the submanifold $\mathcal{M}$ reduces to a singleton. Thus, in the case of $l=n$, one can simply set $\q = \p^T(t_f)$, which indicates
\begin{eqnarray}
\frac{\partial \x}{\partial \q}(t_f,\bar{\q}) = 0_n,\ \frac{\partial \p}{\partial \q}(t_f,\bar{\q}) = I_n.
\end{eqnarray}

\subsubsection{Initial values for the case of $0<l<n$}

Note that $\Pi(\mathcal{N}) \subset\Pi(\mathcal{L}_f)$ and $\Pi(\mathcal{L}_f) = \mathcal{M}$.  Thus, the subset $\Pi(\mathcal{N})$ is diffeomorphic to $(\mathbb{R}^{n-l})^*$ if the subset $\mathcal{N}$ is small enough. In analogy with parameterizing neighbouring extremals, if the subset $\mathcal{N}$ is small enough and $l<n$, there exists an invertible function $\boldsymbol{F}_1:\Pi(\mathcal{N})\rightarrow (\mathbb{R}^{n-l})^*$ such that both the function and its inverse $\F_1^{-1}$ are smooth. Then, for every $\x\in\Pi(\mathcal{N})$, there exists one and only one $\q_1  = (\mathbb{R}^{n-l})^*$ such that $\q_1 = \F_1(\x)$. According to the transversality condition in Eq.~(\ref{EQ:Transversality}), for every $(\x,\p)\in\mathcal{N}$, there exists a $\boldsymbol{\nu}\in(\mathbb{R}^l)^*$ such that 
\begin{eqnarray}
\p^T = \boldsymbol{\nu}{\nabla \phi(\x)}.
\end{eqnarray}
Let us define a function $\F_2:\mathcal{N}\rightarrow (\mathbb{R}^l)^*,\ (\x,\p)\mapsto\F_2(\x,\p)$ as
\begin{eqnarray}
\F_2(\x,\p) = \p^T \nabla\phi^T(\x) \left[\nabla \phi(\x)\nabla \phi^T(\x)\right]^{-1},\nonumber
\end{eqnarray}
such that $\boldsymbol{\nu} = \F_2(\x,\p)$. By Assumption \ref{AS:full_rank}, if the subset $\mathcal{N}$ is small enough, the function $\F_2$ is a diffeomorphism from the domain $\mathcal{N}$ onto its image. Thus, it is enough to set $\F=[\F_1,\F_2]$ such that $\q = [\q_1,\boldsymbol{\nu}]$.  Let $\bar{\q}_1 = \F_1(\bar{\x}(t_f))$, we have $\bar{\q}=(\bar{\q}_1,\bar{\boldsymbol{\nu}})$ where 
$
\bar{\boldsymbol{\nu}} =\F_2(\bar{\x}(t_f),\bar{\p}(t_f))$
  denotes the vector of the Lagrangian multipliers for the nominal extremal $\gamma(\cdot,\bar{\q})$ on $[0,t_f]$. A direct calculation leads to
\begin{eqnarray}
\frac{\partial \x}{\partial \q}(t_f,\bar{\q}) &=& \left[\frac{\partial \x}{\partial \q_1}(t_f,\bar{\q}),\ \frac{\partial \x}{\partial \boldsymbol{\nu}}(t_f,\bar{\q})\right],\label{EQ:initial_matrix3x}\\
\frac{\partial \p}{\partial \q}(t_f,\bar{\q}) &=& \left[\frac{\partial \p(t_f,\bar{\q})}{\partial \q_1},\ \frac{\partial \p(t_f,\bar{\q})}{\partial \boldsymbol{\nu}}\right]\nonumber\\
& =&\left[\sum_{i=1}^{l}{\bar{{\nu}}_i  \nabla^2 \phi_i}({\x}(t_f,\bar{\q}))\frac{\partial  \x(t_f,\bar{\q})}{\partial \q_1},\ {\nabla \phi^T(\x(t_f,\bar{\q}))} \right],
\label{EQ:initial_matrix3}
\end{eqnarray}
where $\phi_i:\mathcal{X}\rightarrow \mathbb{R}$ and $\bar{\nu}_i\in\mathbb{R}$ for $i=1,2,\cdots,l$ are the elements of the vector-valued function $\phi(\x)$ and the vector $\bar{\boldsymbol{\nu}}$, respectively.   Since $\x(t_f,\q) \in\Pi(\mathcal{N})$ is not a function of $\boldsymbol{\nu}$, there holds
\begin{eqnarray}
\frac{\partial \x(t_f,\bar{\q})}{\partial \boldsymbol{\nu}} = \boldsymbol{0}_{n\times l}.
\label{EQ:initial_matrix2}
\end{eqnarray}
Note that, except the matrix $\frac{\partial \x}{\partial \q_1}(t_f,\bar{\q})$, all the quantities for computing the initial conditions in Eq.~(\ref{EQ:initial_matrix3x}) and Eq.~(\ref{EQ:initial_matrix3}) are available.
Let us
take the differentiation of $\phi(\x(t_f,\q)) = 0$ with respect to $\q_1$, we get
\begin{eqnarray}
\nabla \phi(\x(t_f,\bar{\q}))\frac{\partial \x(t_f,\bar{\q}) }{\partial \q_1}= 0.
\label{EQ:initial_matrix1}
\end{eqnarray}
Note that all the column vectors of the matrix $\frac{\partial \x(t_f,\bar{\q}) }{\partial \q_1}$ constitute a basis of the tangent space $T_{\bar{\x}(t_f)}\mathcal{M}$.
Once the matrix ${\nabla \phi(\x(t_f,\bar{\q}))}$ is given, one can compute the full-rank matrix $\frac{\partial \x}{\partial \q_1}(t_f,\bar{\q})$ by a Gram-Schmidt orthogonalization, which can be numerically done by employing the {\it gram} function of MATLAB.

Up to now, all the quantities for computing the initial conditions $\frac{\partial \x}{\partial \q}(t_f,\bar{\q})$ in Eq.~(\ref{EQ:initial_matrix3x}) and $\frac{\partial \p}{\partial \q}(t_f,\bar{\q})$ in Eq.~(\ref{EQ:initial_matrix3}) are available  for $l<n$.

\subsection{Riccati differential equation}

 Note that one has to solve a $2\times n^2$ order of differential equations in order to compute the matrix $S(t)$ if using Eq.~(\ref{EQ:Homogeneous_matrix}) and Eq.~(\ref{EQ:update_formula}).
 In this subsection,  the differential equations of the gain matrix $S(t)$ will be derived such that only $n^2$ order of differential equations are required to solve.
 
According to Eq.~(\ref{EQ:S}), we have  
$$S(\cdot)\frac{\partial \x}{\partial \q}(\cdot,\bar{\q}) = \frac{\partial \p}{\partial \q}(\cdot,\bar{\q}),$$
on $[0,t_f]$. Differentiating this equation with respect to time yields
\begin{eqnarray}
\dot{S}(\cdot)\frac{\partial \x}{\partial \q}(\cdot,\bar{\q}) + {S}(\cdot)\frac{\partial \dot{\x}}{\partial \q}(\cdot,\bar{\q}) = \frac{\partial \dot{\p}}{\partial \q}(\cdot,\bar{\q}),\nonumber
\end{eqnarray}
on $[0,t_f]$. Substituting Eq.~(\ref{EQ:Homogeneous_matrix}) into this equation, we hence obtain
\begin{eqnarray}
\dot{S}(\cdot)  &=& -H_{\x\x}(\bar{\x}(\cdot),\bar{\p}(\cdot)) - H_{\x\p}(\bar{\x}(\cdot),\bar{\p}(\cdot)) S(\cdot) \nonumber\\
&-& S(\cdot) H_{\p\x}(\bar{\x}(\cdot),\bar{\p}(\cdot))  - S(\cdot) H_{\p\p}(\bar{\x}(\cdot),\bar{\p}(\cdot)) S(\cdot),
\label{EQ:riccati_equation}
\end{eqnarray}
on $[0,t_f]$, which is exactly the Riccati-type differential equation in Refs.~\cite{Breakwell:63,Speyer:68,Bryson:69}. According to Eq.~(\ref{EQ:update_formula}), the gain matrix $S(\cdot)$ is discontinuous at each switching time $t_i$. Assume the matrix $\frac{\partial \x}{\partial \q}(t_i^-,\bar{\q})$ is nonsingular, multiplying $ \frac{\partial \p}{\partial \q}(t_i^-,\bar{\q})$ by  $\left[\frac{\partial \x}{\partial \q}(t_i^-,\bar{\q})\right]^{-1}$ and taking into account  Eq.~(\ref{EQ:update_formula}), one obtains
\begin{eqnarray}
S(t_i^- ) &=& \frac{\partial \p}{\partial \q}(t_i^-,\bar{\q})\left[ \frac{\partial \x}{\partial \q}(t_i^-,\bar{\q})\right]^{-1}\nonumber\\
&=& \left[ \frac{\partial \p}{\partial \q}(t_i^+,\bar{\q}) - \Delta \rho_i \frac{\partial  \f_1}{\partial \x}(\bar{\x}(t_i),\bar{\boldsymbol{\tau}}(t_i))\bar{\p}(t_i)\frac{\partial t_i(\bar{\q})}{\partial \q}\right]\nonumber\\
&\times&\left[ \frac{\partial \x}{\partial \q}(t_i^+,\bar{\q})  + \Delta \rho_i \f_1(\bar{\x}(t_i),\bar{\boldsymbol{\tau}}(t_i))\frac{\partial t_i(\bar{\q})}{\partial \q}\right]^{-1}\nonumber\\
&=& \left\{ S(t_i^+) - \Delta \rho_i \frac{\partial  \f_1}{\partial \x}(\bar{\x}(t_i),\bar{\boldsymbol{\tau}}(t_i))\bar{\p}(t_i)\frac{\partial t_i(\bar{\q})}{\partial \q}\left[\frac{\partial \x}{\partial \q}(t_i^+,\bar{\q})\right]^{-1}\right\}\nonumber\\
&\times&\frac{\partial \x}{\partial \q}(t_i^+,\bar{\q})\left[ \frac{\partial \x}{\partial \q}(t_i^+,\bar{\q})  + \Delta \rho_i \f_1(\bar{\x}(t_i),\bar{\boldsymbol{\tau}}(t_i))\frac{\partial t_i(\bar{\q})}{\partial \q}\right]^{-1}.
\label{EQ:S_long}
\end{eqnarray}
Let us define a vector-valued function $R(t_i):  \mathbb{R}_+\rightarrow (\mathbb{R}^n)^* $ as
\begin{eqnarray}
R(t_i)  = \frac{d t_i(\bar{\q})}{d\q} \left[\frac{\partial \x}{\partial \q}(t_i^+,\bar{\q})\right]^{-1}.\nonumber
\end{eqnarray}
Substituting this equation into Eq.~(\ref{EQ:variation_switching_time}) yields
\begin{eqnarray}
R(t_i) = - \left[ \frac{\partial H_1(\bar{\x}(t_i),\bar{\p}(t_i))}{\partial \x^T} + \frac{\partial H_1(\bar{\x}(t_i),\bar{\p}(t_i))}{\partial \p^T}S(t_i^+)\right]/\dot{H}_1(\bar{\x}(t_i),\bar{\p}(t_i)).\nonumber
\end{eqnarray}
Given a nonsingular matrix $\boldsymbol{A}\in\mathbb{R}^{n\times n}$ and two vectors $\boldsymbol{b}\in\mathbb{R}^n$ and $\boldsymbol{c}\in\mathbb{R}^n$, if the matrix $\boldsymbol{A} + \boldsymbol{b}\boldsymbol{c}^T$ is nonsingular, the equation
\begin{eqnarray}
\left(\boldsymbol{A} + \boldsymbol{b}\boldsymbol{c}^T\right)^{-1} = \boldsymbol{A}^{-1} - \frac{\boldsymbol{A}^{-1}\boldsymbol{b}\boldsymbol{c}^T\boldsymbol{A}^{-1}}{1 + \boldsymbol{c}^T\boldsymbol{A}^{-1}\boldsymbol{b}},
\label{EQ:lemma_equation}
\end{eqnarray}
is satisfied (cf. Lemma 6.1.4 in Ref.~\cite{Schattler:12}).
Thus, if the matrix $\frac{\partial \x}{\partial \q}(t_i^+,\bar{\q})$ is nonsingular, taking into account Eq.~(\ref{EQ:lemma_equation}), one gets
\begin{eqnarray}
&&\frac{\partial \x}{\partial \q}(t_i^+,\bar{\q})\left[ \frac{\partial \x}{\partial \q}(t_i^+,\bar{\q})  + \Delta \rho_i \f_1(\bar{\x}(t_i),\bar{\boldsymbol{\tau}}(t_i))\frac{d t_i(\bar{\q})}{d \q}\right]^{-1}\nonumber\\
 &=&I_n - \Delta \rho_i \frac{ \f_1(\bar{\x}(t_i),\bar{\boldsymbol{\tau}}(t_i)) R(t_i) }{1 + \Delta \rho_i R(t_i)\f_1(\bar{\x}(t_i),\bar{\boldsymbol{\tau}}(t_i)) }.\nonumber
\end{eqnarray}
Substituting this equation into Eq.~(\ref{EQ:S_long}), we eventually obtain the result
\begin{eqnarray}
S(t_i^-) &=& \left[S(t_i^+) -\Delta \rho_i \frac{\partial \f_1}{\partial {\x}}(\bar{\x}(t_i),\bar{\boldsymbol{\tau}}(t_i))\bar{\p}(t_i) R(t_i)\right]\nonumber\\
&&\times \left[I_n -\Delta \rho_i \frac{\f_1(\bar{\x}(t_i),\bar{\boldsymbol{\tau}}(t_i)) R(t_i)}{1 + \Delta \rho_i R(t_i)\f_1(\bar{\x}(t_i),\bar{\boldsymbol{\tau}}(t_i))}\right].
\label{EQ:update_riccati}
\end{eqnarray}
This formula provides the required initial condition for Eq.~(\ref{EQ:riccati_equation}) on the interval $(t_{i-1},t_i)$. Then, the gain matrix  $S(\cdot)$ can be propagated further backward by integrating the Ricatti differential equation in Eq.~(\ref{EQ:riccati_equation}).  Note that $S(t_f) = \infty$ since the matrix $\frac{\partial \x}{\partial \q}(t_f,\bar{\q})$ is singular. One can use Eq.~(\ref{EQ:Homogeneous_matrix}) to integrate backward from $t_f$ on a short interval $[t_s,t_f]$ with $t_s < t_f$ to get the matrices $\frac{\partial \x}{\partial \q}(t_s,\bar{\q})$ and $\frac{\partial \p}{\partial \q}(t_s,\bar{\q})$. Then, substituting the matrices $\frac{\partial \x}{\partial \q}(t_s,\bar{\q})$ and $\frac{\partial \p}{\partial \q}(t_s,\bar{\q})$ into the matrix $S(t_s)$, one can use Eq.~(\ref{EQ:riccati_equation}) and Eq.~(\ref{EQ:update_riccati}) to get $S(\cdot)$ on $[0,t_s]$. Once the matrix $S(\cdot)$ on $[0,t_f)$ is computed offline, the matrices $S_2(\cdot)$ and $S_3(\cdot)$ on $[0,t_f)$ can be stored in the onboard computer such that the online computation is just to solve Eq.~(\ref{EQ:Thrust_magnitude}) and Eq.~(\ref{EQ:Thrust_direction}).

If the sweep variables (cf. Chapter 6 in Ref.~\cite{Bryson:69}, Chapter 5 in Ref.~\cite{Schattler:12}, or Chapter 11 in Ref.~\cite{Hull:03})  are employed to calculate the initial values $\frac{\partial \x}{\partial \q}(t_f,\bar{\q})$ and $\frac{\partial \p}{\partial \q}(t_f,\bar{\q})$, to compute the neighboring optimal feedbacks in Eq.~(\ref{EQ:Thrust_magnitude}) and Eq.~(\ref{EQ:Thrust_direction}), not only the gain matrix $S(t)$ but also two other time-varying matrices of $\mathbb{R}^{n\times l}$ and $\mathbb{R}^{l\times l}$ have to be computed offline. Thus, the method of this paper not only demands less storage capacity (cf. Remark \ref{RE:remark2}) but also requires less offline computational time. 


\section{Numerical Example}\label{SE:Numerical}

In this section, we consider to control a spacecraft from an inclined  elliptic orbit to the Earth geostationary orbit. Denote by $a$, $e$, $i$, $\omega$, $\Omega$, $f$ the  semi-major axis, the eccentricity, the inclination, the argument of periapsis, the argument of ascending node, and the true anomaly of the classical orbital elements (COE). The conditions for initial and final orbits are presented in Table \ref{Tab:Parameters} in terms of the COE.
\begin{table}[!ht]
    \renewcommand{\arraystretch}{1}
    \centering
     \caption[]{The initial and final conditions in terms of the COE.}
    \begin{tabular}{ccc}
    \hline\hline
 {COE}  & {Initial conditions}  &{Final conditions}\\
    \hline 

     $a$     &  26,571.429 km                & 42,165.000\ km        \\
     $e$     & 0.750                & 0     \\
    $i$     &  30.000 deg         & 0      \\
     $\omega$     & 0         & Undefined   \\
     $\Omega$     & 0           & Undefined     \\
    $f$  & $\pi$    & Undefined  \\ \hline
    \end{tabular}
\label{Tab:Parameters}
\end{table}
The Earth gravitational constant $\mu$ in Eq.~(\ref{EQ:system}) equals $398600.47$ km$^{3}$$s^{-2}$. The maximum thrust of the engine is $2.0$ N and the specific  impulse of the engine is $I_{sp} = 2000.0$ s. Let $g_0 = 9.8$ m/s$^2$ be the standard gravity at the surface of the Earth, we have $\beta = 1/(I_{sp}g_0) = 5.1\times 10^{-5}$ m$^{-2}$.  The initial mass $m_0$ of the spacecraft is $300.0$ kg. We specify the final time as $t_f = 157.88$ hours.  

 In order to achieve a stable numerical computation \cite{Caillau:03}, we use the {\it modified elementary orbital elements} (MEOE), 
\begin{eqnarray}
P &=& a(1 - e^2),\nonumber\\
e_x &=& e\cos(\omega + \Omega),\nonumber\\
e_y &=& e\sin(\omega+\Omega),\nonumber\\
h_x &=& \tan(i/2)\cos(\Omega),\nonumber\\
h_y &=& \tan(i/2)\sin(\Omega),\nonumber\\
l&=& f+\omega+\Omega,\nonumber
\end{eqnarray}
to compute optimal trajectories.  Note that the initial true longitude is $l_0 = \pi$, see Table \ref{Tab:Parameters}. In order to realize a multi-burn trajectory, we specify the final true longitude as $l_f = 9\times2\pi$ such that the spacecraft flies  9 revolutions around the Earth to get to the final orbit.

\subsection{Trajectory computation}

One can combine the final boundary condition in Table \ref{Tab:Parameters} and the transversality condition in Eq.~(\ref{EQ:Transversality}) to formulate a TPBVP~\cite{Pan:13}.  Then, it is enough to find the zero of this TPBVP in order to get the optimal solution.  A simple shooting method is not stable to solve the TPBVP because one usually does not know a priori the structure of the optimal control function. Thus, we use a regularization procedure developed in Ref.~\cite{Gergaud:06} to first get an energy-optimal trajectory with the same boundary conditions. Then, a homotopy method is employed to get the low-thrust fuel-optimal trajectory with a bang-bang control. The 3-dimensional position vector $\r(\cdot)$ on $[0,t_f]$ is plotted in Figure \ref{Fig:Transferring_Orbit3},
\begin{figure}[!ht]
 \begin{center}
 \includegraphics[trim=0cm 0cm 0cm 0cm, clip=true, width=0.8\textwidth, angle=0]{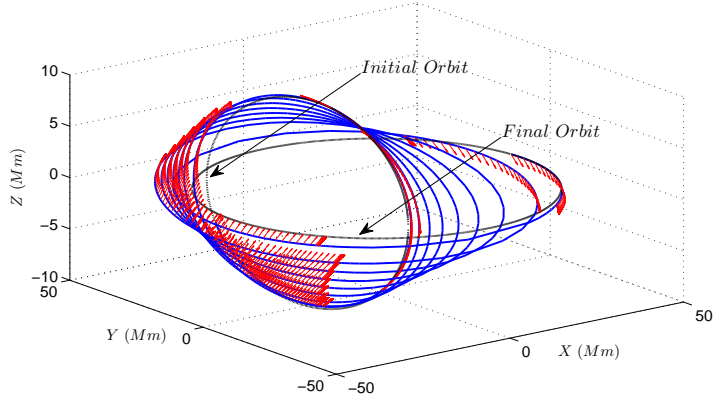}
 \end{center}
 \caption[]{The 3-dimensional trajectory $\r(\cdot)$ on $[0,t_f]$ for the low-thrust multi-burn fuel-optimal orbital transfer problem in a Cartesian coordinate system. The red arrows denote the thrust direction on burn arcs.}
 \label{Fig:Transferring_Orbit3}
\end{figure}
which shows that all the burn arcs occur around the apogees and perigees. To see the regularity conditions, Figure \ref{Fig:Transferring_Orbit4} plots the profiles of $\rho(\cdot)$, $H_1(\cdot)$, and $\| \p_v(\cdot)\|$ with respect to time on $[0,t_f]$. It is seen from this figure that the number of burn arcs along the low-thrust fuel-optimal trajectory is 13 with 24 switching points and that each switching point is regular, i.e., Assumption \ref{AS:regular_switching} holds along the computed extremal.
\begin{figure}[!ht]%
 \begin{center}
 \includegraphics[trim=0cm 0.0cm 0cm 0cm, clip=true,  width=0.9\textwidth, angle=0]{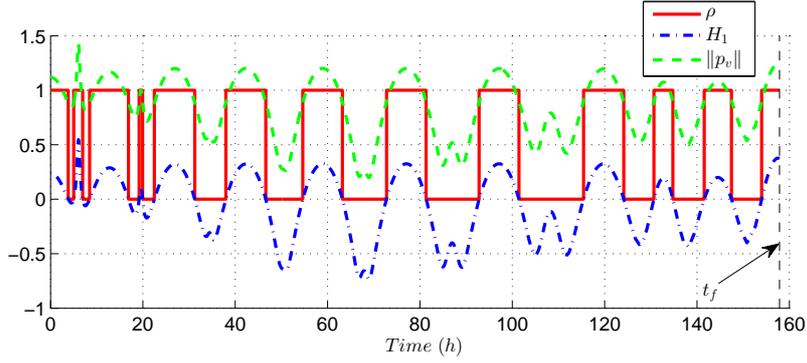}
 \end{center}
 \caption[]{The profiles of $\rho(\cdot)$, $H_1(\cdot)$, and $\parallel \p_v(\cdot)\parallel$ with respect to time on $[0,t_f]$ along the low-thrust multi-burn fuel-optimal trajectory.}
 \label{Fig:Transferring_Orbit4}
\end{figure}
The profiles of semi-major axis $a$, eccentricity $e$, and inclination $i$ along the low-thrust fuel-optimal trajectory are plotted in Figure \ref{Fig:a_e_i}.
\begin{figure}[!ht]
 \begin{center}
 \includegraphics[trim=0cm 0.0cm 0cm 0cm, clip=true, width=0.9 \textwidth, angle=0]{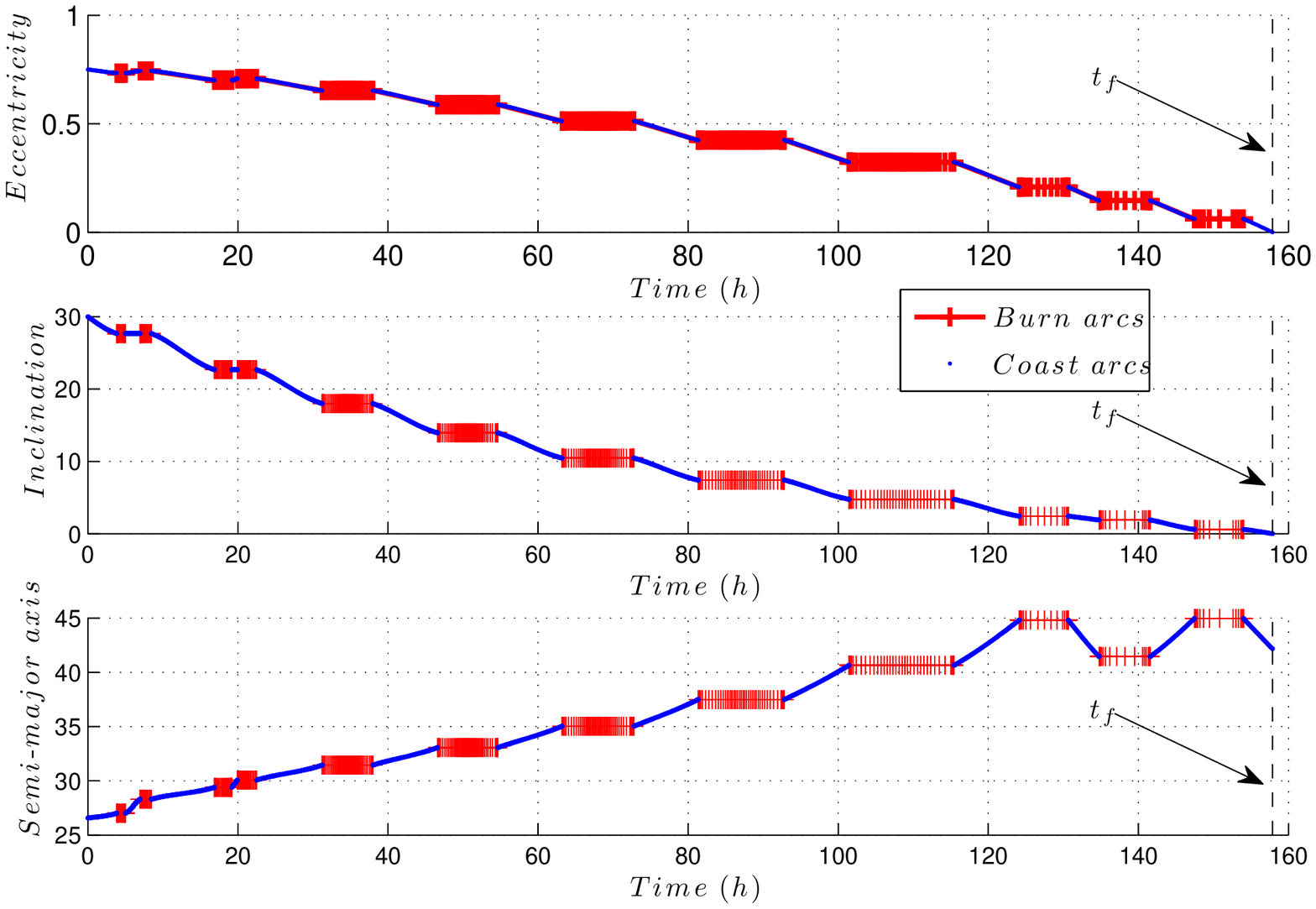}
 \end{center}
 \caption[]{The profiles of eccentricity $e$, inclination $i$, and semi-major axis $a$ against time  along the low-thrust multi-burn fuel-optimal trajectory.}
 \label{Fig:a_e_i}
\end{figure}

\subsection{Existence conditions and focal points}

Note that, except the final mass $m_f$, all other final states are fixed if we use the {\it MEOE} as states such that $$\x = (P,e_x,e_y,h_x,h_y,l,m)^T$$ and 
$$\p=(p_{p},p_{e_x},p_{e_y},p_{h_x},p_{h_y},p_l,p_m)^T.$$
 Thus, applying Eqs.~(\ref{EQ:initial_matrix2}--\ref{EQ:initial_matrix3}), we get the initial condition as
\begin{eqnarray}
\frac{\partial \x}{\partial \q}(t_f,\bar{\q}) &=& \left(\begin{array}{cc}\boldsymbol{0}_6 & \boldsymbol{0}_{6\times1}\\
\boldsymbol{0}_{1\times 6} & 1\end{array}\right),\nonumber\\
 \frac{\partial \p}{\partial \q}(t_f,\bar{\q}) &=&  \left(\begin{array}{cc}I_6 & \boldsymbol{0}_{6\times1}\\
\boldsymbol{0}_{1\times 6} & 0\end{array}\right).\nonumber
\end{eqnarray}
Then, starting from this initial condition, we propogate Eq.~(\ref{EQ:Homogeneous_matrix}) backward from the final time $t_f$ and use the updating formulas in Eq.~(\ref{EQ:update_formula}) at each switching time to compute the matrices ${\partial \x(\cdot,\bar{\q})}/{\partial \q}$ and ${\partial \p(\cdot,\bar{\q})}/{\partial \q}$ on $[0,t_f]$.

For notational simplicity, let $\delta (\cdot) = \det\left[\frac{\partial \x}{\partial \q}(\cdot,\bar{\q})\right]$ on $[0,t_f]$. To have a clear view, the profile of $\text{sgn}(\delta(\cdot))\times |\delta(\cdot)|^{1/20}$ instead of $\delta(\cdot)$ on $[0,t_f]$ is ploted in the top subplot of Figure \ref{Fig:det_4xtf}. Note that $\text{sgn}(\delta(\cdot))\times |\delta|^{1/20}$ on $[0,t_f]$ can capture the sign property of $\delta(\cdot)$ on $[0,t_f]$. We can clearly see from this figure that Conditions \ref{CON:disconjugacy_bang} and \ref{AS:Transversal} are satisfied on $[0,t_f)$. According to Theorem \ref{TH:optimality}, the low-thrust multi-burn fuel-optimal trajectory on $[0,t_f]$ realizes a local optimum. In addition, according to  Corollary \ref{CO:existence}, for every sufficiently small deviation $\Delta \x$ from the nominal trajectory $\bar{\x}(\cdot)$ at every time $t_0\in[0,t_f)$, there exists a neighbouring extremal $(\x(\cdot),\p(\cdot))$ on $[0,t_f]$ in $\mathcal{F}$ such that $\x(t_0) = \bar{\x}(t_0) + \Delta \x$. Thus, the NOG  can be constructed along the computed extremal trajectory.
\begin{figure}[!ht]
 \begin{center}
 \includegraphics[trim=0cm 0.0cm 0cm 0cm, clip=true, width=0.9\textwidth, angle=0]{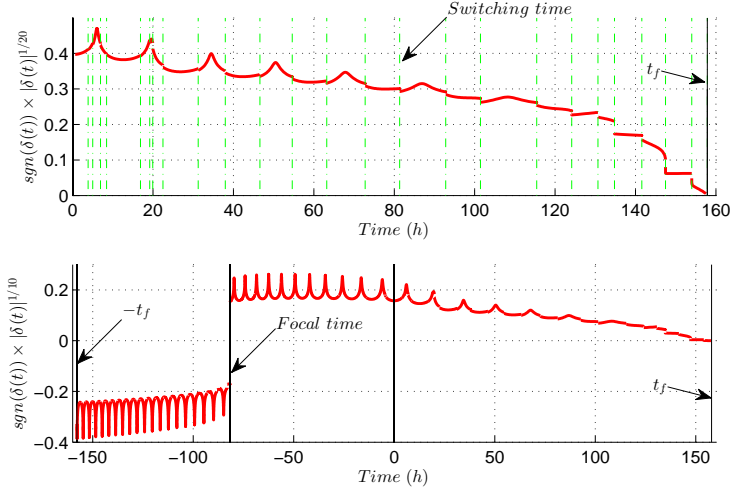}
 \end{center}
 \caption[]{The top plot is the profile of $\text{sgn}(\delta(t))\times |\delta(t)|^{1/20}$ on $[0,t_f]$ and the bottom plot is the profile of $\text{sgn}(\delta(t))\times |\delta(t)|^{1/10}$ on $[-t_f,t_f]$ .}
 \label{Fig:det_4xtf}
\end{figure}

In order to see the occurrence of focal points or to see the sign changes of $\delta(t)$, the profile of $\text{sgn}(\delta(\cdot))\times |\delta(\cdot)|^{1/10}$ on the extended time interval $[-t_f,t_f]$  is plotted in the bottom subplot of Figure \ref{Fig:det_4xtf}. Note that there exists a sign change of $\delta(t)$ at the switching time $t_c\approx -81.716 $ h. Thus,  Condition \ref{AS:Transversal} is violated at $t_c $, i.e., a focal point occurs at $t_c$, which implies that the nominal extremal $(\bar{\x}(\cdot),\bar{\p}(\cdot))$ on $[t_0,t_f]$ is not optimal any more if $t_0<t_c$ \cite{Caillau:15}. In addition,  as is shown by Corollary \ref{CO:existence}, no matter how small the absolute value $|\varepsilon|>0$ is, there exist some unit vectors $\boldsymbol{\eta}\in\mathbb{S}^{n-1}$  such that $\bar{\x}(t_c)+\varepsilon \boldsymbol{\eta} \neq \Pi(\gamma(t_c,\q))$ for every $\q\in\F(\mathcal{N})$. Hence, though the JC of Refs.~\cite{Chuang:96,Breakwell:63,Pontani:15:1,Pontani:15:2,Pontani:15:3,Bryson:69} is satisfied, it is impossible to construct the NOG along the computed extremal on $[t_0,t_f]$ with $t_0<t_c$ since none of neighboring extremals can pass through the point $\bar{\x}(t_c)+\varepsilon \boldsymbol{\eta}$ (cf. Proposition \ref{PR:Diffeomorphism1}).


\subsection{Tests of the NOG}

Let $t_0 =0$ and $\boldsymbol{\eta} = [1,1,1,1,1,1,1]^T$. A series of deviations $\Delta \x =  \varepsilon\boldsymbol{\eta}$ for $\varepsilon\in[0,1.0\times10^{-4}]$ are considered as the disturbances on the initial state $\x_0$. Then, assuming no further perturbations occur for $t>t_0$, for every $\varepsilon\in[0,1.0\times10^{-4}]$, the trajectories starting from the point $\x_0+\Delta \x$ associated with 
the neighbouring optimal feedback control  in Eq.~(\ref{EQ:Thrust_magnitude}) and Eq.~(\ref{EQ:Thrust_direction}) as well as the nominal  control are computed. Hereafter, we say the trajectories associated with the neighboring optimal feedback control as the neighboring optimal ones, and we say the trajectories associated with the nominal control as the perturbed ones.  The final values of $a$, $e_x$, and $e_y$ with respect to $\varepsilon\in[0,1.0\times10^{-4}]$ for the neighboring optimal trajectories and for the perturbed trajectories are plotted in Figure \ref{Fig:error_a_ex_ey}.
\begin{figure}[!ht]
 \begin{center}
 \includegraphics[trim=0cm 0.5cm 0cm 0cm, clip=true, width=0.8\textwidth, angle=0]{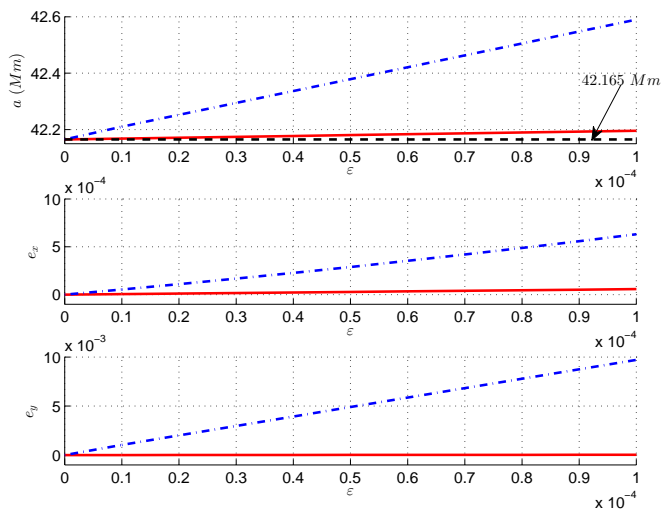}
 \end{center}
 \caption[]{Plots of the final $a$, $e_x$, and $e_y$ with respect to $\varepsilon$ on $[0,1\times 10^{-4}]$. The solid and dashed lines denote the final values for the neighboring optimal trajectories and for the perturbed trajectories, respectively.}
 \label{Fig:error_a_ex_ey}
\end{figure}
 Besides, the final values of $h_x$, $h_y$, and $l$ with respect to  $\varepsilon\in[0,1.0\times10^{-4}]$ for the neighboring optimal trajectories and the perturbed trajectory are plotted in Figure \ref{Fig:error_hx_hy_l}. 
 \begin{figure}[!ht]
 \begin{center}
 \includegraphics[trim=0cm 0.0cm 0cm 0cm, clip=true, width=0.8\textwidth, angle=0]{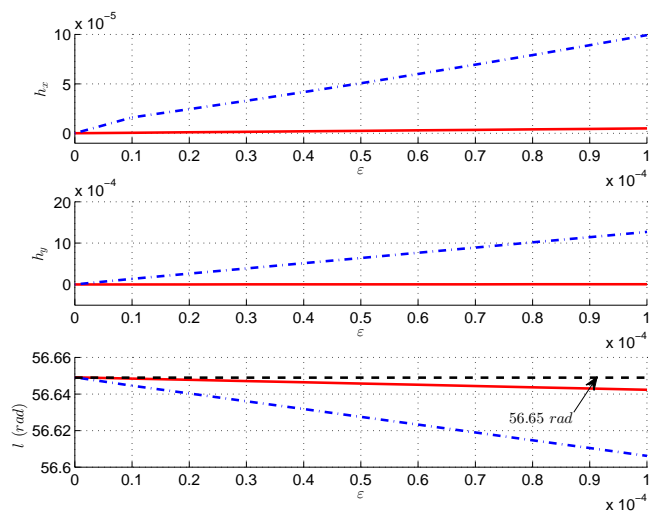}
 \end{center}
 \caption[]{Plots of the final $h_x$, $h_y$, and $l$ with respect to $\varepsilon$ on $[0,1\times 10^{-4}]$. The solid and dashed lines denote the final values for the neighboring optimal trajectories and for the perturbed trajectories.}
 \label{Fig:error_hx_hy_l}
\end{figure}
As is seen from Figure \ref{Fig:error_a_ex_ey}, when $\varepsilon$ increases up to $1.0\times 10^{-4}$, while the error of the final semi-major axis $a$ for the neighboring optimal trajectory remains small, that for the perturbed trajectory increases up to approximately $500.0$ km. We can also see from Figures  \ref{Fig:error_a_ex_ey} and \ref{Fig:error_hx_hy_l}  that the final values of $e_x$, $e_y$, $h_x$,  $h_y$, and $l$ for the neighboring optimal trajectories keep almost unchanged for $\varepsilon\in[0,1.0\times 10^{-4}]$. However, the final values of $e_x$, $e_y$, $h_x$,  $h_y$, and $l$ for the perturbed trajectories increase rapidly with the increase of $\varepsilon$ on $[0,1.0\times 10^{-4}]$. Therefore, the neighboring optimal feedback control in Eq.~(\ref{EQ:Thrust_magnitude}) and Eq.~(\ref{EQ:Thrust_direction}) greatly reduce the errors of final conditions.
 
 To see the advantage of using Eq.~(\ref{EQ:Thrust_magnitude}) rather than Eq.~(\ref{EQ:delta_ti}) to provide the neighboring optimal feedback on thrust magnitude, the profiles of switching function $H_1$ on the time interval $[0,25]$ along the neighboring extremals with respect to $\varepsilon \in[0,0.015]$ are plotted in Figure \ref{Fig:variaion_H1}. 
 \begin{figure}[!ht]
 \begin{center}
 \includegraphics[trim=-1cm 0.0cm 0cm 0cm, clip=true, width=0.8\textwidth]{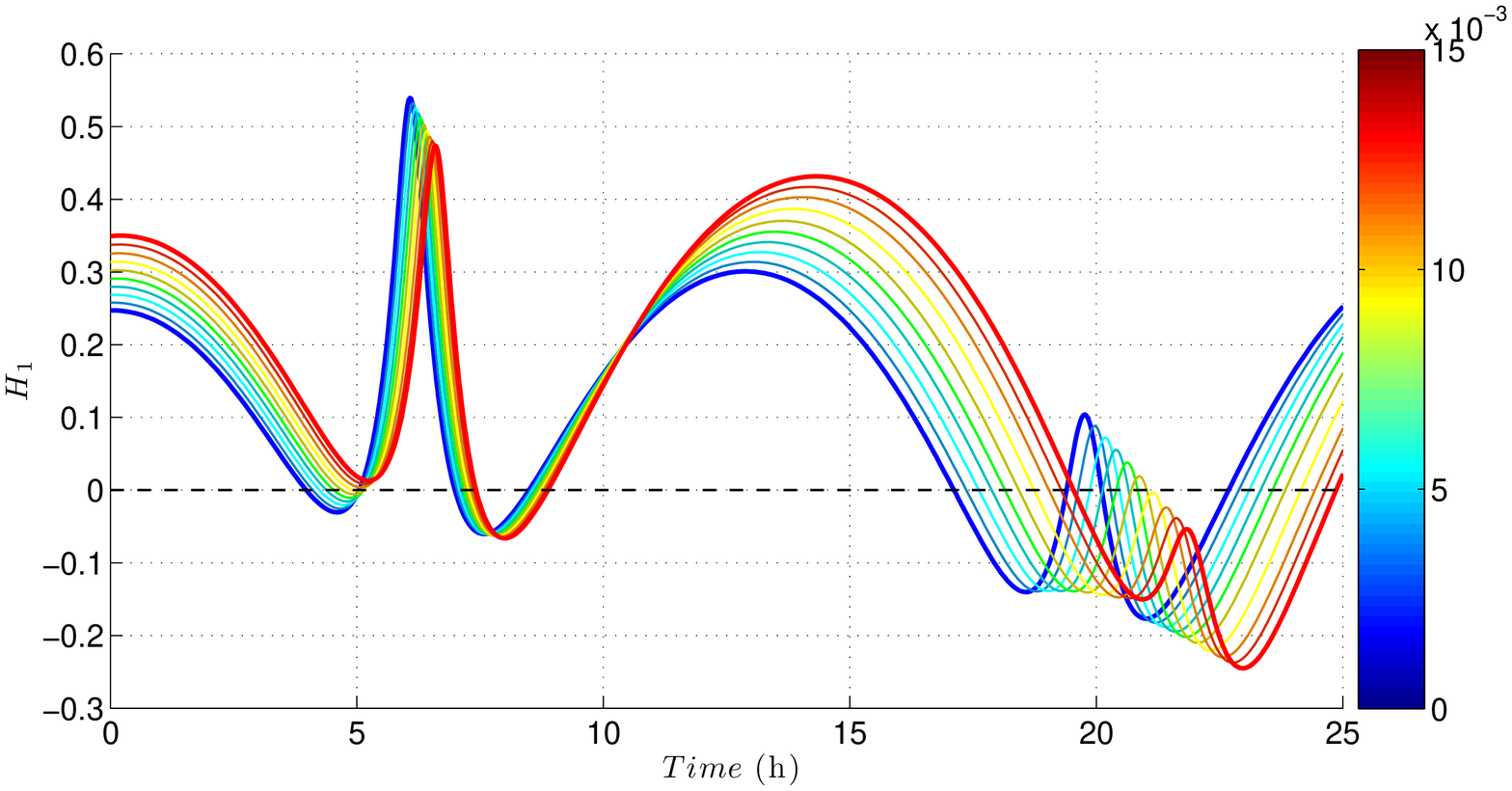}
 \end{center}
 \caption[]{The profiles of the switching functions $H_1$ on the time interval $[0,25]$ along the neighboring optimal trajectories with respect to $\varepsilon\in[0,1.5\times 10^{-2}]$. }
 \label{Fig:variaion_H1}
\end{figure}
We can clearly see from this figure that some switching times disappear around $t=5$ and $20$ with the increase of $\varepsilon$. In this case, while Eq.~(\ref{EQ:delta_ti}) cannot capture the variations of switching times, one can still compute the thrust magnitude of neighboring optimal trajectories by using Eq.~(\ref{EQ:Thrust_magnitude}). Though the disturbances are relativly big as $\varepsilon$ takes values up to $0.015$, Figure \ref{Fig:variaion_H1} shows the potential failure of using Eq.~(\ref{EQ:delta_ti}).

\section{Conclusion}\label{SE:Conclusion}

The neighbouring optimal feedback control strategy for fixed-time low-thrust multi-burn orbital transfer problems is established in this paper through constructing a parameterized family of  neighboring extremals around the nominal one. Two  conditions, including the JC and the TC,  sufficient for the existence of neighbouring extremals in an infinitesimal neighborhood of a bang-bang extremal are formulated. As a byproduct,  the sufficient conditions for the local optimality of bang-bang extremals are obtained.
Then, through deriving the first-order Taylor expansion of the paramterised neighboring extremals, the neighboring optimal feedbacks on thrust direction as well as on thrust magnitude are presented. The formulas of the neighboring optimal feedbacks show that to store only $4/7$ rather than the whole block of a gain matrix  of $\mathbb{R}^{n\times n}$ in the onboard computer is sufficient to realize the online computation.  Finally, a fixed-time low-thrust orbital transfer from an inclined elliptic orbit to the Earth geostationary orbit is computed, and various initial perturbations are tested to show that the NOG developed in this paper significantly reduces the errors of final conditions. The NOG for open-time multi-burn orbital transfers  will be studied in the subsequent research.











\end{document}